\begin{document}

\newtheorem{theorem}{Theorem}[section]
\newtheorem{corollary}[theorem]{Corollary}
\newtheorem{definition}[theorem]{Definition}
\newtheorem{conjecture}[theorem]{Conjecture}
\newtheorem{question}[theorem]{Question}
\newtheorem{claim}{Claim}
\newtheorem{lemma}[theorem]{Lemma}
\newtheorem{prob}[theorem]{Problem}
\newtheorem{proposition}[theorem]{Proposition}
\newtheorem{example}[theorem]{Example}
\newenvironment{proof}{\noindent {\bf
Proof.}}{\hfill $\square$\par\medskip}
\newcommand{\remark}{\medskip\par\noindent {\bf Remark.~~}}
\newcommand{\pp}{{\it p.}}
\newcommand{\de}{\em}

\title{Stability in Bondy's theorem on paths and cycles}

\author{Bo Ning\footnote{College of Computer Science, Nankai University, Tianjin, 300350, P.R. China.
E-mail: bo.ning@nankai.edu.cn (B. Ning). Partially supported by NSFC (11971346).}~~ and~~  Long-Tu Yuan\footnote{
School of Mathematical Sciences,  Key Laboratory of MEA(Ministry of Education)  and  Shanghai Key Laboratory of PMMP,  East China Normal University, Shanghai 200241, China
Email: ltyuan@math.ecnu.edu.cn (L.-T Yuan). Partially supported by NSFC (12271169) and Science and Technology Commission
of Shanghai Municipality (22DZ2229014).}}
\date{}
\maketitle
\begin{abstract}
In this paper, we study the stability result of a well-known theorem of Bondy. We prove that for any 2-connected
non-hamiltonian graph, if every vertex except for at most one vertex has degree at least $k$, then it contains a cycle of length at least $2k+2$ except for some special families of graphs. 
Our results imply several previous classical theorems including a deep and old result by Voss. We point out our result on stability in Bondy's theorem can directly imply a positive solution (in a slight stronger form) to the following problem: Is there a polynomial time algorithm to decide whether a 2-connected graph $G$ on $n$ vertices
has a cycle of length at least $\min\{2\delta(G)+2,n\}$. 
This problem originally motivates the recent study on algorithmic aspects of Dirac's theorem by Fomin, Golovach, Sagunov and Simonov, although a stronger problem was solved by them by completely different methods. Our theorem can
also help us to determine all extremal graphs for wheels on odd number of vertices. We also discuss the relationship between our results and some previous problems and theorems in spectral graph theory and generalized Tur\'{a}n problem.
\end{abstract}

{{\bf Key words:} Long cycle; Stability; Algorithmic Dirac's theorem}

{{\bf AMS Classifications:} 05C35; 05D99.}
\vskip 0.5cm

\section{Introduction}
In this paper, we only consider graphs which are simple, undirected
and unweighed. Throughout this paper, $G$ denotes a graph.
A cycle in $G$ is called a \emph{Hamilton cycle} if it visits each vertex
of $G$ in cyclic order and only once. ``This is named after Sir
William Rowan Hamilton, who described, in a letter to his friend Graves
in 1856, a mathematical game on the dodecahedron in which one
person sticks pins in any five consecutive vertices and the other
is required to complete the path so formed to a spanning cycle
(see Biggs et al. (1986) or Hamilton (1931))." (see Bondy and
Murty \cite[pp.~471--472]{BM08}).

A fundamental theorem on Hamilton cycles in graph theory is
Dirac's theorem \cite{D52}: Every graph $G$ on $n$ vertices
has a Hamilton cycle if minimum degree $\delta(G)\geq n/2$.
One may want to improve the degree condition above, but the
family of graphs $K_{(n-1)/2,(n+1)/2}$ where $n$
is odd shows the theorem is sharp. However, if $\delta(G)$ is
not little compared with $n$, i.e., $\delta(G)=\Omega(n)$,
the cycle length function still seems to behave nicely.
For example, if $\delta(G)\geq n/k$ for some integer $k\geq 3$,
cycles of consecutive lengths or of given
lengths in $G$ were studied in \cite{TZ89,GHS02,HS15,KS20}. For a
comprehensive survey on classical aspects of Dirac's theorem
and its generalizations, we refer the reader to Li \cite{L13}.
Although nearly 70 years after Dirac's theorem appeared, this
area has been growing concern and still mysterious, for example,
see the recent algorithmic extensions of Dirac's theorem by
Fomin, Golovach, Sagunov and Simonov \cite{FGSS-Arxiv}.

But, in several situations $\delta(G)=o(n)$, and it is difficult
to find consecutive cycles or cycles of lengths for this case.
Instead, we can bound the length of the longest cycle in a
graph. To cite some results, we define the {\it circumference}
of $G$, denoted by $c(G)$, to be the length of a longest cycle
in $G$. For two given graphs $G$ and $H$, we use $G\cup H$ to
denote the vertex-disjoint union of copies of $G$ and $H$. Denote
$G+H$ by the graph obtained from $G\cup H$ and adding edges
between each vertex of $G$ and each vertex of $H$. Let $kH$ be
the graph consisting of $k$ vertex-disjoint copies of $H$.
The symbol
$\overline{G}$ denotes the complement of $G$.

In 1952, Dirac \cite{D52} proved the following fundamental theorem.

\begin{theorem}[Dirac \cite{D52}]\label{Thm:D52-longcycle}
Let $G$ be a 2-connected $n$-vertex graph. If $\delta(G)\geq k$ then $c(G)\geq \min\{2k,n\}$.
\end{theorem}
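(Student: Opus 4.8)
The plan is to argue by contradiction from a longest cycle. Suppose $G$ is $2$-connected with $\delta(G)\ge k$ but $c(G)<\min\{2k,n\}$; fix a longest cycle $C$ and write $c:=|C|$, so that $c<2k$ and $c<n$. Since $c<n$, the graph $G-V(C)$ is nonempty; let $H$ be a component of it, and let $A:=N_C(V(H))$ be the set of vertices of $C$ having a neighbour in $H$. Because $G$ is $2$-connected we have $|A|\ge 2$ (a vertex cut of size at most $1$ would otherwise separate $H$ from the rest of $G$), and moreover $G$ contains a path whose two endpoints lie in $A$ and all of whose internal vertices lie in $H$.

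The engine of the argument is the following ear-exchange observation. If $L$ is any path with endpoints $a,b$ on $C$ and all internal vertices outside $C$, then each of the two $a$--$b$ arcs of $C$ has length at least $|L|$; otherwise replacing the shorter arc by $L$ would yield a cycle longer than $C$. Adding the two arc inequalities gives $c\ge 2|L|$. Applied to two consecutive vertices of $A$ (joined by an $H$-path of length $\ge 2$) it shows that no two vertices of $A$ are consecutive along $C$, so that $c\ge 2|A|$. Hence it suffices either to find $k$ attachment vertices of $H$ on $C$, or to find a single path through $H$ with both ends on $C$ of length at least $k$: in either case $c\ge 2k$, a contradiction.

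To produce such an object I would take $P=u_0u_1\cdots u_p$ to be a longest path inside the induced subgraph $G[V(H)]$. Maximality of $P$ forces every neighbour of $u_0$ lying in $H$ to lie on $P$, so $u_0$ has at least $k-p$ neighbours on $C$; the same holds for $u_p$. There are then two regimes. If $p$ is comparatively large, $P$ itself is long, and attaching its two ends to $C$ at distinct vertices — possible by $2$-connectivity, or else handled via the other attachment vertices of $H$ — yields a path through $H$ of length about $p$, hence of length $\ge k$ once $p$ is large enough, and ear-exchange finishes. If $p$ is small, then $k-p$ is large, so $u_0$ has many neighbours on $C$; orienting $C$ and examining the arcs cut out by these neighbours, a strengthened ear-exchange — in which a short arc of $C$ is replaced not by a single chord through $u_0$ but by the entire detour $u_0u_1\cdots u_p$ from near one end of that arc back to $C$, of length about $p$ — forces most of these arcs to be long, and summing once more yields $c\ge 2k$.

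The routine parts (the ear-exchange inequalities, the pigeonhole giving that $u_0$ has at least $k-p$ neighbours on $C$, and the boundary cases $p=0$ and $|V(H)|=1$) are quick. The real obstacle is the intermediate range of $p$: there the ``many spread-out attachments'' bound and the ``single long detour'' bound $c\ge 2(p+2)$ are each individually too weak to reach $2k$, and instead one must extract a single combined inequality — roughly, assigning to each neighbour of $u_0$ on $C$ a gap whose length already reflects the full detour through $u_0u_1\cdots u_p$ — that closes the gap uniformly in $p$. This is also where $2$-connectivity enters most essentially, both to attach $P$ to $C$ at two distinct vertices and, more broadly, to prevent $H$ from hiding behind a single vertex of $C$.
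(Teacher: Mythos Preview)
The paper does not prove Theorem~\ref{Thm:D52-longcycle}; it is quoted as Dirac's classical result with a reference to~\cite{D52}, and serves only as background for the paper's own stability theorem. So there is no proof in the paper to compare against. For what it is worth, the machinery the paper \emph{does} develop---Lemma~\ref{Lem:EG59} and the longest-\emph{path} plus vine-of-paths argument in the Claim inside the proof of Theorem~\ref{Thm:stability-cycle}---specialises easily to give Dirac's theorem: applying Lemma~\ref{Lem:EG59} with every vertex of degree at least $k$, a cycle $C$ of length at most $2k-1$ would force either $V(C)=V(P)$ for a longest path $P$, or $G[V(C)]$ to be an end-block; $2$-connectivity rules out the second, and the Hamiltonian-connectedness of $G[V(C)]$ in the first case lets any outside neighbour extend $P$, contradicting maximality unless $V(C)=V(G)$.

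On your proposal itself: the longest-cycle-plus-component route is a legitimate and classical strategy, and you have correctly located where the real work lies. But as written the argument is incomplete, and not merely in routine details. Your two regimes give $c\ge 2r\ge 2(k-p)$ from spread-out attachments and $c\ge 2(p+2)$ from a single long detour; neither reaches $2k$ for $2\le p\le k-2$, and they cannot be added since they bound the same quantity. The ``single combined inequality'' you allude to is not just bookkeeping: one has to bring the \emph{second} endpoint $u_p$ into play (it too has at least $k-p$ neighbours on $C$, since $N_H(u_p)\subseteq V(P)$) and observe that no neighbour of $u_p$ can lie within distance $p+1$ along $C$ after a neighbour of $u_0$, else the detour $a_i u_0\cdots u_p b$ of length $p+2$ beats that arc. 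Turning this forbidden-shadow constraint into $c\ge 2k$ uniformly in $p$ still needs a careful interleaving count, together with a separate $2$-connectivity argument (two disjoint $V(P)$--$V(C)$ paths) when $p\ge k-1$ and the endpoints may have as few as one neighbour on $C$. Until those steps are written out, what you have is an accurate map of the proof rather than the proof.
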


Ore \cite{O67} characterized all graphs with $c(G)=2k$ under the
condition of Theorem \ref{Thm:D52-longcycle}. Namely, Ore proved
that for any 2-connected non-Hamiltonian graph $G$ with
$\delta(G)=k$, $c(G)=2k$ if and only if
$\overline{K}_k+\overline{K}_s \subseteq G \subseteq K_{k}+\overline{K}_{s},s\geq k+1.$
Voss \cite{V91} improved Dirac's theorem by increasing the lower
bound of $c(G)$ by 2 and characterized all the classes of exceptional graphs.
We need the following several families of graphs to state Voss' theorem.

\begin{definition}
Define the graph $H(n,\ell,a)$ on $n$ vertices by taking a vertex
partition $A\cup B\cup C $ with $|A|=a, |B|=n-\ell+a$ and
$|C|=\ell-2a$ and joining all pairs in $(A,B)$ and all pairs
in $A\cup C$.
\end{definition}

\begin{center}
\begin{tikzpicture}[scale = 0.5]

\filldraw[fill=gray] (3.5,4) ellipse (4.5 and 0.8);
\filldraw[fill=gray] (3.5,7) ellipse (1.5 and 0.5);
\draw  (3.5,0) ellipse (5 and 0.5);
 \foreach \xa / \xb  in {0/0,1/0,2/0,3/0,4/0,5/0,6/0,7/0,     0.5 / 4, 1.5 / 4, 2.5 / 4, 3.5 / 4, 4.5 / 4, 5.5 / 4, 6.5 / 4,    3/ 7, 4 / 7}
\filldraw[fill=black] (\xa , \xb) circle(2pt);
  \foreach \xa / \xb in {0 / 0, 1 / 0, 2 / 0, 3 / 0, 4 /0, 5/ 0, 6 / 0, 7 / 0}
    \foreach \ya / \yb in {0.5 / 4, 1.5 / 4, 2.5 / 4, 3.5 / 4, 4.5 / 4, 5.5 / 4, 6.5 / 4}
    {\draw (\xa,\xb) -- (\ya,\yb) ;}

   \foreach \xa / \xb in {3/ 7, 4 / 7}
    \foreach \ya / \yb in {0.5 / 4, 1.5 / 4, 2.5 / 4, 3.5 / 4, 4.5 / 4, 5.5 / 4, 6.5 / 4}
    {\draw (\xa,\xb) -- (\ya,\yb) ;}
\draw node at (-3,4){$A$};
\draw node at (-3,0){$B$};
\draw node at (-3,7){$C$};
\draw node at (3.5,-1.5){$H(17, 16,7)$};

\end{tikzpicture}
\end{center}

A direct observation shows that $H(n,\ell,a)$ does not contain a cycle of length at least $\ell$.

\begin{definition}
Let $F(t,k)=K_2+(tK_{k-1}\cup K_k)$ and $F_1(t,k)=K_2+(tK_{k-1}\cup K_k\cup K_1)$.
Define the graph $F(s,t,k)$ to be one obtained from three vertex-disjoint
graphs $K_3$, $sK_{k-1}\cup K_1$, and $tK_{k-1}$, in which $V(K_3)=\{x,y,z\}$,
$x$ and $z$ are adjacent to each vertex of $sK_{k-1}\cup K_1$, and $y$
and $z$ are adjacent to each vertex of $tK_{k-1}$.
\end{definition}

\medskip

\begin{tikzpicture}[scale = 0.5]
\foreach \xa / \xb  in {0.5/0,-0.5/0,1/0.5,1/1.5,0/2,-1/0.5,-1/1.5,    4/0,5/0.5,5/1.5,4/2,3/0.5,3/1.5,     8/0,9/0.5,9/1.5,8/2,7/0.5,7/1.5,    5.5/5, 6.5/5  }
\filldraw[fill=black] (\xa , \xb) circle(2pt);

{\draw (5.5,5) -- (6.5,5) ;}

\foreach \xa / \xb in {0.5/0,-0.5/0,1/0.5,1/1.5,0/2,-1/0.5,-1/1.5}
\foreach \ya / \yb in {0.5/0,-0.5/0,1/0.5,1/1.5,0/2,-1/0.5,-1/1.5}
{\draw (\xa,\xb) -- (\ya,\yb) ;}

\foreach \xa / \xb in { 4/0,5/0.5,5/1.5,4/2,3/0.5,3/1.5}
\foreach \ya / \yb in { 4/0,5/0.5,5/1.5,4/2,3/0.5,3/1.5}
{\draw (\xa,\xb) -- (\ya,\yb) ;}

\foreach \xa / \xb in {8/0,9/0.5,9/1.5,8/2,7/0.5,7/1.5}
\foreach \ya / \yb in {8/0,9/0.5,9/1.5,8/2,7/0.5,7/1.5}
{\draw (\xa,\xb) -- (\ya,\yb) ;}

\foreach \xa / \xb in {5.5/5, 6.5/5 }
\foreach \ya / \yb in {0.5/0,-0.5/0,1/0.5,1/1.5,0/2,-1/0.5,-1/1.5,    4/0,5/0.5,5/1.5,4/2,3/0.5,3/1.5,     8/0,9/0.5,9/1.5,8/2,7/0.5,7/1.5 }
{\draw (\xa,\xb) -- (\ya,\yb) ;}

\draw node at (6,6){$K_2$};\draw node at (0,-1){$K_{7}$};\draw node at (4,-1){$K_{6}$};\draw node at (8,-1){$K_{6}$};
\draw node at (6, -2){$F(2,7)$};

\foreach \xa / \xb  in {14.5/0,15.5/0,16/0.5,16/1.5,15/2,14/0.5,14/1.5,    19/0,20/0.5,20/1.5,19/2,18/0.5,18/1.5,    23/0,24/0.5,24/1.5,23/2,22/0.5,22/1.5    ,   20.5/5, 21.5/5, 27/1 }
\filldraw[fill=black] (\xa , \xb) circle(2pt);

{\draw (20.5,5) -- (21.5,5) ;}

\foreach \xa / \xb in {14.5/0,15.5/0,16/0.5,16/1.5,15/2,14/0.5,14/1.5}
\foreach \ya / \yb in {14.5/0,15.5/0,16/0.5,16/1.5,15/2,14/0.5,14/1.5}
{\draw (\xa,\xb) -- (\ya,\yb) ;}

\foreach \xa / \xb in { 19/0,20/0.5,20/1.5,19/2,18/0.5,18/1.5}
\foreach \ya / \yb in {  19/0,20/0.5,20/1.5,19/2,18/0.5,18/1.5}
{\draw (\xa,\xb) -- (\ya,\yb) ;}

\foreach \xa / \xb in {23/0,24/0.5,24/1.5,23/2,22/0.5,22/1.5                       }
\foreach \ya / \yb in {23/0,24/0.5,24/1.5,23/2,22/0.5,22/1.5 }
{\draw (\xa,\xb) -- (\ya,\yb) ;}

\foreach \xa / \xb in {20.5/5, 21.5/5 }
\foreach \ya / \yb in {14.5/0,15.5/0,16/0.5,16/1.5,15/2,14/0.5,14/1.5,    19/0,20/0.5,20/1.5,19/2,18/0.5,18/1.5,    23/0,24/0.5,24/1.5,23/2,22/0.5,22/1.5, 27/1}
{\draw (\xa,\xb) -- (\ya,\yb) ;}

\draw node at (21,6){$K_2$};\draw node at (15,-1){$K_{7}$};\draw node at (19,-1){$K_{6}$};\draw node at (23,-1){$K_{6}$};\draw node at (27,-1){$K_1$};
\draw node at (21, -2){$F_1(2,7)$};
\end{tikzpicture}

\medskip

\begin{tikzpicture}[scale = 0.5]
\foreach \xa / \xb  in {0/0,1/0.5,1/1.5,0/2,-1/0.5,-1/1.5,   0/3,1/3.5,1/4.5,0/5,-1/3.5,-1/4.5,     0/6,1/6.5,1/7.5,0/8,-1/6.5,-1/7.5,    10/0,11/0.5,11/1.5,10/2,9/0.5,9/1.5,   10/3,11/3.5,11/4.5,10/5,9/3.5,9/4.5,      10/7,       5/5,4.5/3,5.5/3           }
\filldraw[fill=black] (\xa , \xb) circle(2pt);

{\draw   (5,5)--(4.5,3);}{\draw   (5,5)--(5.5,3);}{\draw   (5.5,3)--(4.5,3);}

\foreach \xa / \xb in {0/0,1/0.5,1/1.5,0/2,-1/0.5,-1/1.5}
\foreach \ya / \yb in {0/0,1/0.5,1/1.5,0/2,-1/0.5,-1/1.5}
{\draw (\xa,\xb) -- (\ya,\yb) ;}

\foreach \xa / \xb in {0/3,1/3.5,1/4.5,0/5,-1/3.5,-1/4.5}
\foreach \ya / \yb in {0/3,1/3.5,1/4.5,0/5,-1/3.5,-1/4.5}
{\draw (\xa,\xb) -- (\ya,\yb) ;}

\foreach \xa / \xb in { 0/6,1/6.5,1/7.5,0/8,-1/6.5,-1/7.5}
\foreach \ya / \yb in { 0/6,1/6.5,1/7.5,0/8,-1/6.5,-1/7.5}
{\draw (\xa,\xb) -- (\ya,\yb) ;}

\foreach \xa / \xb in {10/0,11/0.5,11/1.5,10/2,9/0.5,9/1.5}
\foreach \ya / \yb in {10/0,11/0.5,11/1.5,10/2,9/0.5,9/1.5}
{\draw (\xa,\xb) -- (\ya,\yb) ;}

\foreach \xa / \xb in {10/3,11/3.5,11/4.5,10/5,9/3.5,9/4.5}
\foreach \ya / \yb in {10/3,11/3.5,11/4.5,10/5,9/3.5,9/4.5}
{\draw (\xa,\xb) -- (\ya,\yb) ;}

\foreach \xa / \xb in {5/5,4.5/3 }
\foreach \ya / \yb in {0/0,1/0.5,1/1.5,0/2,-1/0.5,-1/1.5,   0/3,1/3.5,1/4.5,0/5,-1/3.5,-1/4.5,     0/6,1/6.5,1/7.5,0/8,-1/6.5,-1/7.5 }
{\draw (\xa,\xb) -- (\ya,\yb) ;}

\foreach \xa / \xb in {5/5,5.5/3}
\foreach \ya / \yb in {10/0,11/0.5,11/1.5,10/2,9/0.5,9/1.5,   10/3,11/3.5,11/4.5,10/5,9/3.5,9/4.5,      10/7}
{\draw (\xa,\xb) -- (\ya,\yb) ;}

\draw node at (5,2){$K_3$};
\draw node at (-2,1){$K_{6}$};
\draw node at (-2,4){$K_{6}$};
\draw node at (-2,7){$K_{6}$};
\draw node at (12,1){$K_{6}$};
\draw node at (12,4){$K_{6}$};
\draw node at (12,7){$K_1$};

\draw node at (5, -2){$F(2,3,7)$};


\foreach \xa / \xb  in { 20/5,21/5,      15/1, 16/1,16/2,16/0, 18/1,19/1, 21/1,22/1, 24/1,25/1         }
\filldraw[fill=black] (\xa , \xb) circle(2pt);
{\draw (20,5) -- (21,5) ;}{\draw (15,1) -- (16,1) ;}{\draw (15,1) -- (16,0) ;}{\draw (15,1) -- (16,2);}
{\draw (18,1) -- (19,1);}{\draw (21,1) -- (22,1);}{\draw (24,1) -- (25,1);}

\foreach \xa / \xb in {20/5,21/5}
\foreach \ya / \yb in {  15/1, 16/1,16/2,16/0, 18/1,19/1, 21/1,22/1, 24/1,25/1}
{\draw (\xa,\xb) -- (\ya,\yb) ;}
\draw node at (15.5,-1){$S_4$}; \draw node at (21.5,0){$M_{6}$}; \draw node at (20.5,6){$K_2$};

\draw node at (20.5, -2){$K_2+(S_4\cup M_{6})$  };
\end{tikzpicture}

\medskip

Let $S_s:=K_{1,s-1}$, i.e., the star on $s$ vertices, and $M_t$ be the graph on
$t$ vertices consisting of a matching with $\lfloor t/2 \rfloor$ edges
and one possible vertex (if $t$ is odd).

The following theorem can be found in a monograph
of Voss \cite{V91}.

\begin{theorem}[Voss \cite{V91}]\label{Thm:V77}
Let $G$ be a 2-connected non-Hamiltonian $n$-vertex graph with $c(G)\leq 2k+1$.
If $\delta(G)\geq k$ then $G$ is a subgraph of one of the following graphs:
\begin{itemize}
         \item $H(2k+1,2k+1,k)$, and $H(n,2k+2,k)$ with $n\geq 2k+2$;
         \item $F(s,t,k)$ with $s\geq 1$, $t\geq 1$, and $F(t,k)$ with $t\geq 1$; and
         \item $K_2+M_{t}$ with $t\geq 6$, $ K_2+(S_s\cup M_{t})$ with $s+t\geq 6$ when $k=3$,
         and $K_3+M_{t}$ with $t\geq 7$ when $k=4$.
       \end{itemize}
\end{theorem}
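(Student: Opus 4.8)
\medskip
\noindent\textbf{Proof idea.}
The plan is two-fold. The natural route is to read Theorem~\ref{Thm:V77} off the general stability result announced in the abstract: a $2$-connected non-Hamiltonian graph $G$ with $\delta(G)\ge k$ and $c(G)\le 2k+1$ trivially satisfies the weaker hypothesis that all but at most one vertex has degree at least $k$ (here the exceptional vertex is simply absent), so $G$ must belong to the list of extremal families furnished by the main theorem. It then remains to verify that, when no exceptional vertex is present, that list degenerates to precisely the graphs in Theorem~\ref{Thm:V77}: the graphs $H(\cdot)$ are the extremal configurations of ``independent-set-plus-clique'' type, the graphs $F(t,k)$, $F_1(t,k)$ and $F(s,t,k)$ are those of ``clique blocks glued along one or two vertices'' type, and $K_2+M_t$, $K_2+(S_s\cup M_t)$, $K_3+M_t$ are exactly the residual members that survive when $k\in\{3,4\}$. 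This last step is a finite, if slightly tedious, identification.

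For a self-contained argument using only what precedes, I would proceed directly. Since $G$ is $2$-connected with $\delta(G)\ge k$, Theorem~\ref{Thm:D52-longcycle} gives $c(G)\ge\min\{2\delta(G),n\}$; as $G$ is non-Hamiltonian we have $c(G)<n$, which forces $\delta(G)=k$ and $c(G)\in\{2k,2k+1\}$. If $c(G)=2k$, Ore's characterization recalled above applies and yields $\overline{K}_k+\overline{K}_s\subseteq G\subseteq K_k+\overline{K}_s$ with $s\ge k+1$; an explicit embedding (send the clique part onto $A$, most of the independent part onto $B$, and the one or two leftover independent vertices onto $C$) shows $K_k+\overline{K}_s$ is a subgraph of $H(2k+1,2k+1,k)$ when $n=2k+1$ and of $H(n,2k+2,k)$ when $n\ge 2k+2$. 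This settles the first case.

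The substantive case is $c(G)=2k+1$. I would fix a longest cycle $C=v_1v_2\cdots v_{2k+1}v_1$ and a component $D$ of $G-V(C)$, which is nonempty and, by $2$-connectivity, has at least two attachment vertices on $C$. The standard longest-cycle exchange machinery --- consecutive attachments of $D$ on $C$ lie at cyclic distance at least $2$, no chord of $C$ may be used to re-route around a segment that meets $D$, and $\delta(G)=k$ forces the arcs of $C$ to carry almost all incidences --- should pin down a set $X$ with $|X|\le 3$ (the counterpart of the apex $K_2$ or $K_3$ in $F(t,k)$, $F_1(t,k)$, $F(s,t,k)$) such that every component of $G-X$ is a clique, of order at most $k$ and at most $k-1$ apart from one possible exception, attached to $X$ according to one of the listed templates. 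When $k\in\{3,4\}$ the blocks are so small that they need not be complete in the expected way, and one must separately catalogue how a matching or a star can be appended to a $2$- or $3$-vertex set without creating a cycle of length $2k+2$; this produces the families $K_2+M_t$, $K_2+(S_s\cup M_t)$ and $K_3+M_t$.

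The main obstacle is exactly this last case $c(G)=2k+1$: controlling every way the outside components and the chords of $C$ can interact, excluding a cycle of length $2k+2$ in each configuration, and proving that the non-clique part of $G$ really does collapse to a set of at most three vertices. This delicate bookkeeping is what makes Voss' original argument long, and in the stability-theorem route it is precisely the part that the main theorem absorbs, leaving only the finite identification of extremal families described in the first paragraph.
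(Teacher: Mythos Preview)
The paper does not give an independent proof of Theorem~\ref{Thm:V77}; it is quoted from Voss~\cite{V91}, and the only argument the paper supplies is that it is a special case of Theorem~\ref{Thm:stability-cycle} with the exceptional vertex absent. Your first route is therefore exactly the paper's route. The one detail you flag---that the list in Theorem~\ref{Thm:stability-cycle} must collapse to the list in Theorem~\ref{Thm:V77}---comes down to the single discrepancy $F_1(t,k)$ versus $F(t,k)$: since $F_1(t,k)=K_2+(tK_{k-1}\cup K_k\cup K_1)$ and the extra $K_1$ has degree~$2$ there, any $G\subseteq F_1(t,k)$ with $\delta(G)\ge k\ge 3$ must omit that vertex and hence lies in $F(t,k)$. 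Your Ore/$c(G)=2k$ paragraph and the embedding of $K_k+\overline{K}_s$ into $H(\cdot)$ are also correct.

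Your self-contained second route, via a longest \emph{cycle} and an attachment analysis of outside components, is not how the paper proceeds. The proof of Theorem~\ref{Thm:stability-cycle} works instead with a longest \emph{path} $P=x_1\cdots x_m$, uses a vine argument (the Claim immediately after the choice of $P$) to force the extreme neighbours of $x_1$ and $x_m$ on $P$ to interlock, and then in the structured subcases applies Lemma~\ref{minimum degree without a-b path} on longest $(u,v)$-paths to pin down the clique-block decomposition behind the $F$-families. So if you wanted a proof that does not simply cite Voss, specialising the paper's path-and-vine argument would be more efficient than redoing the longest-cycle bookkeeping you sketch; but as a derivation of Theorem~\ref{Thm:V77} within this paper, your first paragraph already suffices.
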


Among classical generalizations of Dirac's theorem (see \cite{L13}),
Bondy \cite{B72} strengthened Theorem~\ref{Thm:D52-longcycle} as follows.
In fact, Bondy try to solve a conjecture by Erd\H{o}s which determined
the Tur\'an number of a cycle of length $n-1$ and generalized it to
a general extremal problem, and the following theorem plays an important 
role in the whole process. 

\begin{theorem}[Bondy \cite{B72}]\label{Thm:B72}
Let $G$ be a 2-connected $n$-vertex graph on $n$. If every vertex
except for at most one vertex is of degree at least $k$ then
$c(G)\geq \min\{2k,n\}$.
\end{theorem}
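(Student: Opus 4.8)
The plan is to derive Bondy's theorem from Dirac's theorem (Theorem~\ref{Thm:D52-longcycle}) by a reduction that removes the single exceptional low-degree vertex. Let $G$ be a $2$-connected $n$-vertex graph in which every vertex has degree at least $k$ except possibly one vertex $v$. If $v$ does not exist, or if $\deg(v)\ge k$, then $\delta(G)\ge k$ and Theorem~\ref{Thm:D52-longcycle} already gives $c(G)\ge\min\{2k,n\}$; so assume $\deg(v)=d<k$. If $n\le 2k$ we must produce a Hamilton cycle, which will be handled by a separate (easy) argument using $2$-connectivity and the fact that all but one vertex has degree at least $k\ge n/2$ (a Chv\'atal--Erd\H{o}s or Ore-type count); the substantive case is $n\ge 2k+1$, where the target is a cycle of length at least $2k$.

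The core idea is a local modification at $v$. Since $G$ is $2$-connected, $d=\deg(v)\ge 2$; write $N(v)=\{u_1,\dots,u_d\}$. First I would try the cleanest reduction: delete $v$ and, to preserve $2$-connectivity, add a new edge between two suitably chosen neighbours of $v$ that are not already adjacent (if all pairs in $N(v)$ are adjacent the argument is even simpler — $N(v)$ lies in a clique and one can reroute). The resulting graph $G'=G-v+u_iu_j$ has $n-1$ vertices, minimum degree at least $k$ (every vertex other than $v$ kept its degree, and the two endpoints of the new edge only gained), and I claim it can be chosen $2$-connected: if $G-v$ is already $2$-connected, take $G'=G-v$; otherwise $G-v$ has cut vertices, but because $G$ is $2$-connected each block of $G-v$ is ``attached'' to $v$ through at least one vertex, and adding one edge between neighbours of $v$ in two different end-blocks strictly reduces the block-tree, so finitely many such additions yield a $2$-connected $G'$ on the same vertex set $V(G)\setminus\{v\}$. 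Then Theorem~\ref{Thm:D52-longcycle} gives a cycle $C'$ in $G'$ with $|C'|\ge\min\{2k,n-1\}=\min\{2k,n-1\}$. If $|C'|\ge 2k$ we are done since $C'$ uses only edges of $G$ except possibly the finitely many added ones — so I would instead add edges only when forced and argue that any added edge $u_iu_j$ on $C'$ can be replaced by the path $u_i v u_j$ in $G$, which does not decrease the length; thus $G$ has a cycle of length at least $|C'|\ge 2k$, as required. The remaining subcase is $n-1<2k$, i.e.\ $n=2k$ or $n=2k+1$ with $|C'|\ge n-1$; here $C'$ is a Hamilton or near-Hamilton cycle of $G-v$, and reinserting $v$ via one of its $\ge 2$ neighbours on $C'$ (it has all $d\ge 2$ neighbours on $V(C')$ since $C'$ spans $G-v$) yields a cycle through $v$ of length $|C'|+1\ge n-1+1 = n \ge 2k$, or we at least keep length $\ge 2k$.

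The main obstacle is the $2$-connectivity bookkeeping in the reduction: ensuring that after deleting $v$ one can restore $2$-connectivity by adding only edges inside $N(v)$, and simultaneously that those edges can be ``uncontracted'' through $v$ without ever gaining more than one unit per added edge or losing control of the cycle length. A clean way to package this is to process the block-cut tree of $G-v$: its leaves (end-blocks) each contain a neighbour of $v$ distinct from the relevant cut vertex (else that cut vertex would separate $G$), so repeatedly adding an edge between neighbours of $v$ in two distinct leaves merges blocks and terminates with a $2$-connected graph, and on a long cycle at most... — but actually at most one added edge of that batch can lie on any single cycle in the final $2$-connected graph traversing the old block structure, so the uncontraction costs nothing. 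I would also need to treat small cases ($n\le 2k$, and $\deg(v)=2$) by hand, and double-check the boundary $n=2k$ where the conclusion demands a Hamilton cycle; there the count forces $G-v$ to be dense enough that Dirac/Chv\'atal--Erd\H{o}s applies directly, giving a Hamilton cycle of $G-v$ through both neighbours of $v$ and hence a Hamilton cycle of $G$.
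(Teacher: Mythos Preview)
The paper does not give its own proof of Theorem~\ref{Thm:B72}; it is quoted as a known result of Bondy and used as background for the stability theorem. So there is nothing to compare your argument against in the paper itself, and your proposal has to stand on its own.

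It does not. The step ``The resulting graph $G'=G-v+u_iu_j$ has $n-1$ vertices, minimum degree at least $k$ (every vertex other than $v$ kept its degree, and the two endpoints of the new edge only gained)'' is simply false: deleting $v$ removes all $d$ edges incident with $v$, so every neighbour $u_\ell$ of $v$ drops from degree at least $k$ to degree at least $k-1$. Adding a single edge $u_iu_j$ repairs at most two of these $d$ vertices, and in the branch where you take $G'=G-v$ with no added edges (because $G-v$ is already $2$-connected) none of them are repaired. Hence Dirac's theorem applied to $G'$ only yields $c(G')\ge\min\{2(k-1),\,n-1\}$, which is two short of what you need. A small example already breaks the argument: take $k=3$ and let $G$ be $K_4$ minus one edge $cd$, together with a vertex $v$ joined to $c$ and $d$. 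Then $G$ is $2$-connected on $5$ vertices with $\deg(v)=2$ and all other degrees equal to $3$; $G-v$ is $2$-connected but has $\delta=2$, its unique Hamilton cycle $acbd$ has $c$ and $d$ nonconsecutive, and your ``reinsert $v$'' rule gives only a $4$-cycle, whereas the theorem demands a $5$-cycle (which exists, but not via your construction).

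Patching by adding more edges inside $N(v)$ does not rescue the scheme either. If $N(v)$ is already a clique you cannot add anything and the degree deficit persists; and once you add several edges, your long cycle in $G'$ may use more than one of them, so replacing each by a detour $u_i v u_j$ is impossible since $v$ can be visited only once. Your block-tree heuristic (``at most one added edge can lie on any single cycle'') is not justified and is false in general. The standard proofs of Bondy's theorem do not go through Dirac by deletion; they argue directly on a longest path (or cycle), controlling the position of the single low-degree vertex so that the usual neighbourhood-shift count $|N_P^-(x_1)\cap N_P(x_m)|=\emptyset$ still forces length at least $2k$. If you want to salvage a reduction, a cleaner device is to work in $G$ with an added edge $x_1x_m$ on a longest path rather than to delete $v$.
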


The humble goal of this paper is to prove a stability result of
Bondy's theorem, which also improves Voss' theorem.

Our work is motivated by stability results from extremal graph
theory. In 1977, Kopylov \cite{K77} proved a very strong theorem:
if $G$ is an $n$-vertex 2-connected graph with $c(G)<k$,
then $e(G)\leq \max\{h(n,k,2),h(n,k,\lfloor (k-1)/2\rfloor)\}$,
where $h(n,k,a)=e(H(n,k,a))$.
Based on \cite{FKV16}, F\"{u}redi, Kostochka, Luo and Verstra\"{e}te \cite{FKLV18}
finally proved a stability result of Koplov's theorem in 2018.
For more work in this spirit, see \cite{CZ19,MN20,MY20+}.
Our main result can be seen as a solution to an analogous problem 
by considering the degree condition instead of the edge number condition
(see last section in \cite{FKV16}).

By a method quite different from Voss, we prove
the following theorem.
\begin{theorem}\label{Thm:stability-cycle}
Let $G$ be a 2-connected non-Hamiltonian $n$-vertex graph
with $c(G)\leq 2k+1$. If every vertex except for at most
one vertex is of degree at least $k\geq 2$, then $G$ is a
subgraph of one of the following graphs:
\begin{itemize}
\item $H(2k+1,2k+1,k)$,\footnote{Notice that $n=2k+1$ for this case.}
and $H(n,2k+2,k)$ with $n\geq 2k+2$;
\item $F(s,t,k)$ with $s\geq 1$, $t\geq 1$, and $F_1(t,k)$ with $t\geq 1$;
\item $K_2+M_{t}$ with $t\geq 6$,  and $ K_2+(S_s\cup M_{t})$
with $s+t\geq 6$ when $k=3$; and $K_3+M_{t}$ with $t\geq 7$ when $k=4$.
\end{itemize}
\end{theorem}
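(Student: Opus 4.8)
The plan is to take a longest cycle and push the classical longest-cycle and bridge arguments all the way to their equality cases.

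\emph{Setup.} I would fix a longest cycle $C$ of $G$, orient it, and write $v^{+},v^{-}$ for the successor and predecessor along $C$ of $v\in V(C)$, and $A^{+}=\{v^{+}:v\in A\}$. By Bondy's theorem (Theorem~\ref{Thm:B72}) we have $c(G)\ge\min\{2k,n\}$; as $G$ is non-Hamiltonian with $c(G)\le 2k+1$, this forces $n>2k$, hence $|C|\in\{2k,2k+1\}$ and $R:=G-V(C)\ne\varnothing$. Let $w$ denote the unique vertex, if any, allowed to have degree below $k$. The whole argument is an analysis of the components of $R$ and of the bridges of $C$.

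\emph{Sharpened structural facts.} For a component $H$ of $R$ put $S=N_{C}(H)=\{a_{1},\dots,a_{t}\}$ in cyclic order; then $t\ge 2$ by $2$-connectedness. Let $I_{i}$ be the open arc of $C$ strictly between $a_{i}$ and $a_{i+1}$. I would establish: (a) each $I_{i}$ is nonempty; (b) a \emph{detour bound}: if $P$ is an $(a_{i},a_{j})$-path all of whose internal vertices lie in $H$, then \emph{both} arcs of $C$ between $a_{i}$ and $a_{j}$ have at least $|E(P)|$ edges, since otherwise rerouting $C$ through $P$ across the shorter arc produces a longer cycle; and (c) an Ore/Bondy-type argument on the successor set $S^{+}$ limiting its adjacencies (its members are pairwise nonadjacent and send no edge into the interior of $H$). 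Combining (b) and (c) with the degree hypothesis then controls how many arcs there are and how long each is.

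\emph{Counting and rigidity.} I would pick $x\in V(H)$ with $x\ne w$, so $\deg_{G}(x)\ge k$ and $N_{G}(x)\subseteq V(H)\cup S$, and run the detour bound through every arc; comparing the resulting inequalities with $|C|\le 2k+1$ squeezes them all into equalities. I expect this to split into two rigid pictures. In the first, some component of $R$ has exactly $k$ attachment vertices, these form a cutset $S$ with $|S|=k$, and the $k$ arcs they cut out of $C$ all have length $2$ except possibly one of length $3$; the equality conditions force $G-S$ to be the disjoint union of a clique on at most two vertices and an independent set, so $G$ is a subgraph of $H(n,2k+2,k)$, or of $H(2k+1,2k+1,k)$ when $n=2k+1$. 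In the second, every component of $R$ has exactly $2$ or $3$ attachments and is a clique of order $k$ or $k-1$, the cutset $S$ is a $K_{2}$ or a $K_{3}$, and applying the same counting to the bridges of $C$ shows that the rest of $G$ is assembled from cliques of order $k$ or $k-1$ glued along $S$; reassembling reproduces $F(s,t,k)$ in the $K_{3}$-case and $F(t,k)$, $F_{1}(t,k)$ in the $K_{2}$-case, the appearance of $w$ inside $R$ being exactly what lets a $K_{k-1}$ degenerate to a $K_{1}$, i.e.\ what produces $F_{1}$ rather than $F$.

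\emph{Low cases and the main obstacle.} The case $k=2$ (where $c(G)\le 5$) is a finite check. For $k=3$ and $k=4$ the clean ``each bridge is a clique'' conclusion degenerates --- a bridge may be a star $S_{s}$ or an induced matching rather than a clique, since $K_{k-1}$ is then $K_{2}$ --- which is precisely where $K_{2}+M_{t}$, $K_{2}+(S_{s}\cup M_{t})$ and $K_{3}+M_{t}$ enter, and these need a separate short analysis. \emph{The main difficulty is the bookkeeping in the counting step}: propagating the equality conditions so that \emph{every} bridge meeting the cutset is forced to be a clique of exactly the right order, ruling out hybrid bridges, and keeping track of where the exceptional vertex $w$ sits at each stage; this, together with the $k\in\{3,4\}$ degenerations, is where essentially all the work lies.
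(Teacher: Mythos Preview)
Your proposal takes a genuinely different route from the paper. You fix a longest \emph{cycle} $C$ and analyse the components of $G-V(C)$ via detour bounds and successor-set independence; the paper instead passes to an edge-maximal counterexample, takes a longest \emph{path} $P=x_1\dots x_m$, and argues with the ``vine'' technique on the crossing pattern of $N_P(x_1)$ and $N_P(x_m)$. The paper's engine is a sharpened Bondy--Jackson lemma (Lemma~\ref{minimum degree without a-b path}): if a $2$-connected graph has every $(u,v)$-path on at most $k+1$ vertices and every vertex other than $u,v$ and at most one more of degree $\ge k$, then $G-\{u,v\}$ is $\ell K_{k-1}$ or $\ell K_{k-1}\cup K_1$. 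Once the path analysis isolates a $2$-cut $\{x_i,x_j\}$ on $P$, the piece between them is fed into this lemma and the clique structure drops out; in the paper's Case~2 a $3$-cut $\{x_u,x_i,x_v\}$ appears and the lemma is applied on each side, yielding $F(s,t,k)$ directly. Your approach is closer in spirit to Voss's original proof, which the authors explicitly say they are \emph{not} following.

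Two points in your sketch need more than bookkeeping. First, the dichotomy ``$|S|=k$ versus $|S|\in\{2,3\}$'' is asserted rather than derived: from the arc inequalities you only get $2|S|\le |C|\le 2k+1$, hence $|S|\le k$; ruling out intermediate values of $|S|$ requires the degree hypothesis \emph{inside} $H$ to force long detours, and that is essentially the content of the Bondy--Jackson step you have not isolated. Second, and relatedly, your claim that each small-attachment component ``is a clique of order $k$ or $k-1$'' is precisely what Lemma~\ref{minimum degree without a-b path} supplies; the detour bound and the independence of $S^{+}$ by themselves do not force components of $R$ to be cliques. Finally, your reading of the $K_3$-case is slightly off: in $F(s,t,k)$ every outer bridge attaches to a \emph{two}-element subset of the triangle (either $\{x,z\}$ or $\{y,z\}$), and the triangle itself lies on $C$; the $K_3$ emerges only because different bridges use different pairs, so the reassembly must coordinate the attachment pairs of \emph{all} bridges simultaneously. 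The paper sidesteps this global reassembly by working on the path and invoking its $(u,v)$-path lemma once per side of the cut, which buys noticeably cleaner bookkeeping at the cost of proving that auxiliary lemma first.
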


One of the most important motivations of the above theorem is that there
do not exist $n$-vertex $k$-regular graphs when $n$ and $k$ are both odd.
Hence, the graphs in this case which are close to $k$-regular graphs have
all vertices with degree $k$  except for exactly one vertex with degree $k-1$.
With the help of Theorem~\ref{Thm:stability-cycle}, we can characterize the
extremal graphs for wheels with odd number of vertices (odd wheels) in
Section \ref{Sec:Remark}.
The extremal graphs for odd wheels are important in the sense that we cannot delete an induced copy of linear forest in odd wheels to reduce its
chromatic number, that is, we cannot apply a deep theorem of Simonovits
to determine the extremal graph for odd wheels (see \cite{S74} and
\cite{Y22} for more information).

Our work is also motivated by interesting phenomenons
in spectral graph theory. Nikiforov and Yuan \cite{NY14}
tried to determine the graphs with maximum signless
Laplacian spectral radius among graphs of order $n$ without paths of given
length $k$. The main ingredient of their proof is a stability result
about graphs with large minimum degree and with no long paths.
Li and one of the authors \cite{LN16} studied the extremal graphs those
attain the maximum spectral radius among all graphs with minimum degree
at least $k$ but containing no Hamilton cycles. The main tool is
a stability result \cite[Lemma~2]{LN16} of a 1962 result of non-hamiltonian graph
due to Erd\H{o}s \cite{E62} (see also
F\"{u}redi, Kostochka and Luo \cite[Theorem~3]{FKL17}). We refer the reader
to the last section for corollaries
of our stability theorem which may have further applications to
subgraph problems in spectral graph theory.

Very recently, Zhu, Gy\H{o}ri, He, Lv, Salia and Xiao \cite{Z22} reproved Voss's Theorem and found many applications of Voss's Theorem in generalized Tur\'{a}n problems.
In this sense, our theorem would have some  applications in this area.

The last but not the least, Theorem \ref{Thm:stability-cycle}
can give a solution to the following algorithmic problem,
which is the original motivation of recent research of Fomin, Golovach, Sagunov and Simonov \cite{FGSS-Arxiv}. It should be mentioned that a
complete solution to a general version of the problem was
already given by Fomin et al. \cite{FGSS-Arxiv} by a
quite complicated and different method.
\begin{prob}[Fomin
et al. Question 1 in \cite{FGSS-Arxiv}]\label{Prob-1.4}
Is there a polynomial time algorithm to decide whether a 2-connected graph
$G$ on $n$ vertices contains a cycle of length at least $\min\{2\delta(G)+1,n\}$?
\end{prob}

It was commented in \cite{FGSS-Arxiv}
that ``The methods developed in the extremal Hamiltonian graph
theory do not answer this question.". We point out that
our proof of Theorem \ref{Thm:stability-cycle} is with aid of
the technique of ``vines of paths" (see p.~34--35 in  \cite{B03})
from extremal graph theory.

The paper is organized as follows. In Section \ref{Sec:Proof},
we present a complete proof of Theorem \ref{Thm:stability-cycle}.
In Section \ref{Sec:Remark}, we list some corollaries of our
result and discuss on an application of our result to algorithmic
aspects of long cycles.

\section{Proof of Theorem~\ref{Thm:stability-cycle}}\label{Sec:Proof}

Let $x$ be a vertex of a graph $G$. 
The {\it neighborhood} of $x$ in $G$ is denoted $N_G(x)=\{y\in V(G):xy\in E(G)\}$. 
If there is no danger of ambiguity,
we write it as $N(x)$ for simply. The {\it degree} of $x$ in $G$, denoted
by $d_{G}(x)$ (also $d(x)$ for simple), is the size of $N_G(x)$. For a path $P$ in $G$, denote
by $N_P(x)=N_G(x)\cap V(P)$, $d_P(x)=|N_P(x)|$ and
$N_P[x]=N_P(x)\cup \{x\}$. We use $x_iPx_j$ to denote the sub-path
$x_ix_{i+1}\ldots x_{j}$ of $P=x_1x_2\ldots x_k$ for $1\leq i<j\leq k$.
A Hamiltonian path of $G$ is a path which contains all vertices in $V(G)$.
For a subset $X\subset V(G)$, denote by $G[X]$  the subgraph
of $G$ induced by $X$.

We need the following lemma proved by Erd\H{o}s and Gallai \cite[Lemma~1.8]{EG59}.

\begin{lemma}[Erd\H{o}s and Gallai \cite{EG59}]\label{Lem:EG59}
Let $G$ be a graph and $x_1\in V(G)$ with $d(x_1)\geq 1$.
Suppose that the degree of every vertex of $G$
other than $x_1$ is at least $k\geq 2$.
Let $P=x_1x_2\ldots x_t$ be a path such that:
1) $P$ is a longest $x_1$-path; 2) subject to 1), $x_t$ has
a neighbor $x_s$ such that the distance between $x_s$
and $x_t$ along $P$ is largest among all $x_1$-paths.
Let $C=x_sPx_tx_s$. If $|V(C)|\leq 2k-1$, then
$V(C)=V(P)$ (i.e., $s=1$ and $x_1x_t\in E(G)$), or $G[V(C)]$ is an
end block (maximal 2-connected subgraph with at most one
cut-vertex) of $G$. Moreover, in both cases every two vertices
of $C$ is connected  by a Hamiltonian path of $G[V(C)]$ and
$|V(C)|\geq k+1$.
\end{lemma}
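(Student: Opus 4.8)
\textbf{Proof plan for Lemma~\ref{Lem:EG59}.}
The plan is to analyze the structure forced by the extremal path $P=x_1x_2\ldots x_t$ together with the cycle $C=x_sPx_tx_s$, and to exploit the degree hypothesis to locate the neighbors of the vertices in $C$. First I would record the two key consequences of maximality. Since $P$ is a longest $x_1$-path, every neighbor of the endpoint $x_t$ must lie on $P$ (otherwise $P$ extends), so $N(x_t)\subseteq V(P)$; this is the standard ``longest path'' observation. The choice in condition~2) then says that among all longest $x_1$-paths, $x_t$ is the endpoint whose nearest chord-neighbor $x_s$ is as far back along $P$ as possible, i.e.\ $C$ is chosen to be as long as possible. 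I would then fix the index $s$ so that $x_s\in N(x_t)$ with $s$ minimal, so that $x_t$ has no neighbor among $x_1,\dots,x_{s-1}$.

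Next I would prove that no vertex of $C$ can have a neighbor strictly outside $V(C)$ unless that neighbor forces a longer $x_1$-path or a cut-vertex structure. The central tool is the classical rotation argument: using the cycle $C$ through $x_s,\dots,x_t$ and the edge $x_sx_t$, for any vertex $x_j$ on $C$ one can rotate $P$ to obtain a new longest $x_1$-path ending at a different vertex of $C$ while keeping the segment $x_1Px_{s}$ (or $x_1Px_{s-1}$) fixed. Applying condition~2) to each such rotated path shows that every vertex of $C$ plays the role of an endpoint of some longest $x_1$-path, hence every vertex of $C$ has \emph{all} of its neighbors inside $V(C)\cup\{x_{s-1}\}$; in particular the only way to leave the block $G[V(C)]$ is through $x_{s-1}$, which must then be a cut-vertex, making $G[V(C)]$ an end block. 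The alternative is that $s=1$, in which case $C=V(P)$ and $x_1x_t\in E(G)$, giving the first conclusion. I would separate these two cases cleanly: either $s\geq 2$ and $x_{s-1}$ is the unique cut-vertex, or $s=1$.

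For the ``moreover'' part I would use the degree bound to count. Every vertex of $C$ other than possibly $x_1$ has degree at least $k$, and by the previous step all those neighbors lie in $V(C)$ (together with $x_{s-1}$ in the end-block case, but $x_{s-1}\notin N(x_t)$ by minimality of $s$). Applying a standard Erd\H{o}s--Gallai/Ore-type chord counting to the endpoints $x_s$ and $x_t$ of the Hamilton path of $G[V(C)]$ along the cycle, the hypothesis $|V(C)|\leq 2k-1$ forces $|V(C)|\geq k+1$: if the cycle were shorter, the neighbors of $x_s$ and $x_t$ on $C$ would have to overlap in a way that produces a longer cycle, contradicting the extremality. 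Finally, to show $G[V(C)]$ is Hamiltonian-connected on $C$, I would use the fact that $G[V(C)]$ contains the cycle $C$ and that each internal vertex has high degree relative to $|V(C)|$; a Bondy--type rotation-extension along $C$ yields a Hamiltonian path between any prescribed pair of vertices.

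The main obstacle I expect is the rotation bookkeeping in the second step: one must argue that \emph{every} vertex of $C$, not merely $x_t$, can be realized as an endpoint of a longest $x_1$-path so that condition~2) can be invoked uniformly, and one must handle carefully the borderline vertex $x_{s-1}$ and the distinction between the $s=1$ case and the end-block case. Getting the extremal choice in condition~2) to propagate correctly through all rotations—while ensuring no rotation accidentally lengthens $P$ or produces a longer chord distance than the one defining $x_s$—is the delicate point on which the whole argument turns.
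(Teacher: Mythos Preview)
The paper does not prove this lemma; it is quoted from Erd\H{o}s and Gallai \cite{EG59} and used as a black box in the proof of Lemma~\ref{minimum degree without a-b path}. So there is no proof in the paper to compare against, and I assess your plan on its own terms.

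Your outline follows the classical rotation argument and is broadly correct, but two points need repair. First, the cut-vertex of the end block is $x_s$, not $x_{s-1}$: the vertex $x_{s-1}$ does not even lie in $V(C)$. More importantly, your assertion that ``every vertex of $C$ has all of its neighbors inside $V(C)\cup\{x_{s-1}\}$'' is false for $x_s$; when $s\ge 2$, no rotation can make $x_s$ the endpoint of a longest $x_1$-path (if it could, the edge $x_{s-1}x_s$ would violate condition~2)), and $x_s$ may well have several neighbors among $x_1,\dots,x_{s-1}$ or elsewhere. The correct statement is that every vertex of $V(C)\setminus\{x_s\}$ has all its neighbors in $V(C)$, which is precisely what makes $x_s$ the unique cut-vertex of the end block. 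Second, the bound $|V(C)|\ge k+1$ is immediate from $d(x_t)\ge k$ together with $N(x_t)\subseteq V(C)\setminus\{x_t\}$; the hypothesis $|V(C)|\le 2k-1$ is not used for this direction, and no Ore-type counting is needed.

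The substantive gap is the Hamiltonian-connectedness of $G[V(C)]$. You cannot simply invoke ``a Bondy-type rotation-extension'': in $G[V(C)]$ one vertex (namely $x_s$ when $s\ge 2$, or $x_1$ when $s=1$) may have degree below $k$, so the standard $\delta\ge (|V(C)|+1)/2$ criterion does not apply directly. What is actually needed is an iterated rotation argument: starting from the Hamiltonian $x_s$--$x_t$ path along $C$, each rotation produces a new longest $x_1$-path whose last $t-s+1$ vertices are exactly $V(C)$, so condition~2) forces the new endpoint's neighbors back into $V(C)$, and one propagates until every vertex of $V(C)$ is reached as an endpoint from $x_s$; a second pass then handles arbitrary pairs. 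You correctly identify this bookkeeping as the delicate point, but it must be carried out explicitly rather than deferred to a named theorem.
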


Bondy and Jackson \cite{BJ85} proved the following result, which was implicitly suggested
by Erd\H{o}s and Gallai (without proof, see Theorem 1.16 in \cite{EG59}).
\begin{theorem}[Bondy and Jackson \cite{BJ85}]\label{Thm:Bondy-Jackson}
Let $G$ be a 2-connected graph on at least $4$ vertices. Let $u$ and $v$
be two distinct vertices. If every vertex other than $u$, $v$
and at most another one vertex of $G$ is of degree at least $k$,
then $G$ has a $(u,v)$-path of length at least $k$.
\end{theorem}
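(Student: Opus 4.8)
The plan is to argue by contradiction. Assume $G$ has no $(u,v)$-path of length at least $k$, fix a \emph{longest} $(u,v)$-path $P=x_0x_1\cdots x_p$ with $x_0=u$ and $x_p=v$, and work towards a contradiction from the bound $p\le k-1$, equivalently $|V(P)|=p+1\le k$. The degree hypothesis enters through one observation: every vertex of $P$ other than the endpoints $u,v$ and the at most one further exceptional vertex $z$ has degree at least $k$, and since such a vertex $x_i$ has at most $|V(P)|-1\le k-1$ neighbours on $P$, it must send at least $k-p\ge 1$ edges to vertices off $P$. Before the main dichotomy I would dispose of the degenerate range $k=2$: since $G$ is $2$-connected, Menger's theorem gives two internally disjoint $(u,v)$-paths, and these cannot both be the single edge $uv$, so one of them has length at least $2$, as required.

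The case $V(P)=V(G)$ is then immediate. Here $n=|V(G)|=p+1\le k$, so the maximum degree of $G$ is at most $n-1\le k-1$. On the other hand $|V(G)|\ge 4$ forces $p=n-1\ge 3$, so $P$ has at least two internal vertices, at most one of which is $z$; hence some internal vertex is non-exceptional and has degree at least $k>k-1$, a contradiction. From now on there is a component $H$ of $G-V(P)$.

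The heart of the matter is the non-spanning case, and this is where I would deploy the technique of vines. Because $G$ is $2$-connected, Menger's theorem supplies, for $H$, two vertex-disjoint paths from $H$ to $V(P)$ ending at distinct vertices $x_a,x_b$ with $a<b$; concatenating them through $H$ yields an \emph{ear}, an $(x_a,x_b)$-path internally disjoint from $P$ that passes through at least one off-path vertex and so has length at least $2$. Maximality of $P$ imposes two constraints on each ear. First, writing $L$ for the ear's length, replacing the subpath $x_aPx_b$ (of length $b-a$) by the ear produces a $(u,v)$-path of length $p-(b-a)+L$, so $L\le b-a$; in particular $x_a$ and $x_b$ are never consecutive. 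Second, no off-path connected set may be joined to two consecutive vertices of $P$, since that permits a direct insertion lengthening $P$. Running Menger across all off-path components assembles a vine over $P$, namely a family of such ears covering $V(G)\setminus V(P)$, whose spans are pinned down by these maximality constraints.

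The contradiction is then extracted by matching the vine against the degree count. Each non-exceptional internal vertex of $P$ pushes at least $k-p$ edges off $P$, so its off-path neighbours lie on ears of the vine; the maximality constraints forbid inserting these neighbours one at a time, yet a global accounting along the vine shows that, as long as $p\le k-1$, the off-path vertices can be threaded back into $P$ as a single longer $(u,v)$-path, contradicting the choice of $P$. I expect this global threading step to be the main obstacle, and the reason the two fixed endpoints make the problem genuinely harder than the rooted version in Lemma~\ref{Lem:EG59}: P\'osa-type rotations are unavailable here, so every lengthening must be realised by vine-detours that preserve \emph{both} $u$ and $v$ as endpoints. A secondary bookkeeping difficulty is to check that the two vertices other than $u$ allowed low degree, namely $v$ and $z$, do not corrupt the count; this is handled by noting that at most these two vertices escape the bound, so they can perturb the ear-insertion tally only by a bounded amount, which the slack $k-p\ge 1$ absorbs.
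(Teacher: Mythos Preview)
The paper does not prove Theorem~\ref{Thm:Bondy-Jackson}; it is quoted from \cite{BJ85} as a known result, and the paper instead proves the sharper Lemma~\ref{minimum degree without a-b path}, which characterises the extremal graphs and hence contains Theorem~\ref{Thm:Bondy-Jackson} as a corollary. So your proposal should be measured against that lemma's proof and against its own internal completeness. On the latter, there is a genuine gap precisely where you flag the ``main obstacle'': you assert that ``a global accounting along the vine shows that \ldots\ the off-path vertices can be threaded back into $P$ as a single longer $(u,v)$-path'', but no such accounting is supplied. The correct observation that each non-exceptional internal vertex of $P$ has an off-path neighbour does not by itself yield a lengthening, since you have already noted that an ear over $x_a,x_b$ satisfies $L\le b-a$ and so cannot be inserted one at a time. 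In Bondy--Jackson style proofs this step is where all the work lies (iterated fans, careful selection of the vine, and a counting argument over the segments of $P$), and your paragraph is a statement of intent rather than an argument. The bookkeeping for the exceptional vertex is also too loose: the slack $k-p$ may be exactly $1$, so ``a bounded perturbation which the slack absorbs'' is not a proof.

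By contrast, the paper's proof of Lemma~\ref{minimum degree without a-b path} avoids vines entirely and takes a different decomposition. It removes $u$ and $v$, works inside a component $H$ of $G-\{u,v\}$, starts a longest path at the exceptional vertex $w$ (if present), and invokes the Erd\H{o}s--Gallai end-block lemma (Lemma~\ref{Lem:EG59}) to produce a terminal cycle $C$ with $|V(C)|\ge k-1$ whose induced subgraph is Hamiltonian-connected. Menger's theorem then feeds $u$ and $v$ into $H$ by two disjoint paths, and the Hamiltonian-connectedness of $G[V(C)]$ immediately manufactures a $(u,v)$-path of length $\ge k$ unless the component is exactly a $K_{k-1}$, followed by a short case analysis on $|V(C)|$. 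This approach buys two things your outline lacks: the exceptional vertex is pinned down as the root of the path so Lemma~\ref{Lem:EG59} applies cleanly, and the ``threading'' is replaced by a one-shot use of Hamiltonian-connectedness of the end-block. If you want to finish along your lines you must actually execute the vine count; otherwise, adapting the $G-\{u,v\}$ end-block argument is the quicker route.
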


We shall generalize Theorem \ref{Thm:Bondy-Jackson} by characterizing all extremal graphs with
somewhat different method. This result shall play an important role
in the proof of Theorem \ref{Thm:stability-cycle}.

\begin{lemma}\label{minimum degree without a-b path}
Let $k\geq 3$. Let $G$ be a 2-connected graph with two vertices $u$
and $v$ such that the longest $(u,v)$-path
is on at most $k+1$ vertices. If every vertex except for $u$, $v$ and
at most one more vertex of $G$, say $w$ (if existing), is of degree at least $k$, then
$G-\{u,v\}= \ell  K_{k-1}\cup K_1 $, or $G-\{u,v\}=\ell  K_{k-1}$,
for some $\ell\geq 1$. Moreover, $u$ and $v$ are adjacent to each
vertex of $V(G)-\{u,v,w\}$.
\end{lemma}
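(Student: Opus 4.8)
The plan is to start from a longest $(u,v)$-path $P = u = y_0 y_1 \cdots y_p = v$, which by hypothesis has $p \le k$. Write $R = V(G) \setminus V(P)$. The first step is to establish that $P$ spans $V(G)$, i.e.\ $R = \emptyset$; otherwise, since $G$ is $2$-connected, some vertex of $R$ lies in a component of $G-V(P)$ attached to $P$ at two distinct vertices, and a standard vine/ear argument lets us reroute $P$ through that component, producing a strictly longer $(u,v)$-path — contradiction. (Here I expect to need a little care because $u,v,w$ may have small degree; but $2$-connectivity already forces at least two internally disjoint $(u,v)$-paths, and any chord or detour off $P$ into $R$ can be absorbed.) Once $V(P) = V(G)$, we have $|V(G)| = p+1 \le k+1$.

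Next I would analyze the structure of $G$ on the path $P$ directly. The key observation is the familiar crossing-chords obstruction: if $y_i y_j$ and $y_{i'} y_{j'}$ are two chords of $P$ with $0 \le i < i' \le j < j' \le p$ (a ``crossing'' pair), one can splice them to obtain a longer $(u,v)$-path, contradicting maximality — more precisely, the standard rotation argument shows that if $y_i \sim y_j$ then $y_{i-1} \not\sim y_{j}$ type relations hold. Applying this to each internal vertex $y_i$ (which has $d(y_i) \ge k$ unless $y_i = w$), the neighborhoods are forced to be ``nested'' in a very rigid way. Since there are only $k+1 \le$ (a few more than $k$) vertices total and each non-exceptional internal vertex needs degree $\ge k$, the only way to pack this many edges without a crossing pair is for $G - \{u,v\}$ to break into cliques: each such clique has $\le k-1$ vertices (a $K_k$ would already force a $(u,v)$-path of length $k+1$ once we reconnect via $u,v$), hence exactly $K_{k-1}$, with possibly one leftover $K_1$ (which must be $w$, the low-degree vertex). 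This yields $G - \{u,v\} = \ell K_{k-1}$ or $\ell K_{k-1} \cup K_1$.

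The ``moreover'' clause — that $u$ and $v$ are each adjacent to every vertex of $V(G) \setminus \{u,v,w\}$ — then follows from a counting/connectivity argument: every vertex $z$ in a $K_{k-1}$-block has $d(z) \ge k$ but only $k-2$ neighbors inside its block, so it has $\ge 2$ neighbors in $\{u,v\}$ (it cannot be adjacent to a vertex of another block without creating a longer path), hence it is adjacent to both $u$ and $v$; if the extra vertex $w$ forms the solitary $K_1$, connectivity still forces it adjacent to both $u$ and $v$, but $w$'s own degree is unconstrained so we only claim adjacency for the non-exceptional vertices. A final sanity check is that $\ell \ge 1$: this is immediate since $G$ is $2$-connected on $\ge 3$ vertices (indeed $\ge k+1 \ge 4$), so $V(G) \setminus \{u,v\} \ne \emptyset$.

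The main obstacle I anticipate is making the ``no crossing chords $\Rightarrow$ union of $K_{k-1}$'s'' step fully rigorous while simultaneously tracking the single exceptional vertex $w$: the rotation arguments are clean when all internal vertices have degree $\ge k$, but one must check that a single low-degree vertex cannot enable an exotic non-clique configuration, and that it must end up either inside some $K_{k-1}$ or as the isolated $K_1$. I would handle this by first proving the statement assuming no exceptional vertex (pure degree-$\ge k$ on $V(G)\setminus\{u,v\}$), extracting the clique decomposition, and then re-inserting $w$ and checking the finitely many ways it can attach, using Lemma~\ref{Lem:EG59} and Theorem~\ref{Thm:Bondy-Jackson} (which already guarantees a $(u,v)$-path of length $\ge k$, pinning down $p = k$ exactly) to eliminate the bad cases.
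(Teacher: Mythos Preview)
Your very first step is wrong: the longest $(u,v)$-path need \emph{not} span $V(G)$. Take $G=K_2+\ell K_{k-1}$ with $V(K_2)=\{u,v\}$ and $\ell\ge 2$. This graph is $2$-connected, every vertex outside $\{u,v\}$ has degree exactly $k$, and every $(u,v)$-path can enter at most one of the $K_{k-1}$'s, so its length is at most $k$ (i.e.\ $k+1$ vertices). Yet $|V(G)|=2+\ell(k-1)$, which is arbitrarily large. Your ear/vine argument fails here because each off-path component of $G-V(P)$ is attached to $P$ only at the endpoints $u$ and $v$; rerouting through it replaces the interior of $P$ rather than extending it. More generally, even when a component is attached at internal vertices $y_i,y_j$, the detour through it may be no longer than the segment $y_iPy_j$ it bypasses, so ``strictly longer'' is not automatic. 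Since the conclusion of the lemma explicitly allows $\ell\ge 1$ arbitrary, the structure you are trying to prove cannot possibly force $|V(G)|\le k+1$; the entire chord-analysis on a Hamiltonian $(u,v)$-path that follows is therefore built on a false premise.

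What the paper does instead is work in $G':=G-\{u,v\}$ \emph{component by component}. For each component $H$ it picks $x_1$ (taken to be $w$ if $w\in V(H)$), grows a longest $x_1$-path in $H$, and invokes the Erd\H{o}s--Gallai block lemma (Lemma~\ref{Lem:EG59}) on the terminal cycle $C$ of that path. A case split on $|V(C)|$ then forces $H$ to be a $K_{k-1}$ (or the isolated vertex $w$): if $C$ is short, the lemma says $G[V(C)]$ is Hamilton-connected of order $\ge k-1$, and the two disjoint paths from $\{u,v\}$ to $C$ (Menger) pin $|V(C)|=k-1$ exactly; if $C$ is long, those same Menger paths immediately yield a $(u,v)$-path on $\ge k+2$ vertices. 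The ``moreover'' clause then follows by the degree count you gave. If you want to salvage your approach, the right unit of analysis is a single component of $G-\{u,v\}$, not all of $G$ at once.
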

\begin{proof}
Let $G^{\prime}:=G-\{u,v\}$.
Suppose $k=3$. Let $H$ be a component of $G'$. If $H$
is 2-connected, then by Dirac's theorem, there is a cycle of length
at least 3 in $H$. By Menger's theorem, there are two vertex-disjoint paths
$P_1,P_2$ from $u,v$ to $w_1,w_2$ of $C$. Then $uP_1w_1Cw_2P_2v$
is a $(u,v)$-path of length at least 4, where $w_1Cw_2$ is a segment
of $C$ with length at least 2. If $H$ is separable with $|H|\geq 3$,
then we can find a $(u,v)$-path of length at least 4 by considering two
disjoint paths from $u,v$ to two end-blocks of $H$, respectively.
Thus, $H$ is a single vertex or an edge. Suppose that
$H$ is an edge. Since each vertex other than $u,v$ and $w$ has degree at least 3, $u$ and $v$
are adjacent to each vertex of $H$ if $H$ contains no $w$.
If $w\in V(H)$, then obviously $d(w)=2$
or $d(w)=3$, and the other vertex of $H$ has degree 3. If $|H|=1$,
then $V(H)=\{w\}$ and $u,v$ are adjacent to $w$ since $G$ is 2-connected.
In summary, $G'=\ell K_2$ or $G'=\ell K_2\cup K_1$ for some $\ell\geq 1$.
Moreover, $u$ and $v$ are adjacent to each
vertex of $V(G)-\{u,v,w\}$.
This proves the case of $k=3$.

In the following, assume $k\geq 4$. For this case, every vertex other than at most
one of $G^\prime$ is of degree at least $k-2\geq 2$. Let $H$ be a component
of $G'$. Let $x_1=w$ if $w\in V(H)$, and let $x_1$ be chosen arbitrarily
if $w\notin V(H)$. Choose a path $P=x_1x_2\ldots x_t$
such that: 1) $P$ is a longest $x_1$-path in $H$ starting
from $x_1$ and ending in $V(G)\setminus \{x_1\}$; 2) subject to 1),
$P$ is chosen such that $x_sx_t\in E(G)$ and $s$ is minimum among all $x_1$-paths.
Let $C=x_sPx_tx_s$, where $1\leq s \leq t-2$.

\medskip

(a). $|V(C)|\leq 2k-5$. If $d_{G^\prime}(x_1)\geq 1$, then by Lemma~\ref{Lem:EG59}, $V(P)=V(C)$
or $G[V(C)]$ is a terminal block with the cut-vertex $x_s$, and in both cases
every two vertices of $C$ are connected by a Hamiltonian path of $G[V(C)]$ and
$|V(C)|\geq k-1$.

We consider the following two cases:

\medskip

(a.1). $V(P)=V(C)$. Since $G$
is 2-connected, by Menger's theorem, $u$ and $v$ are adjacent to $C$
by two vertex-disjoint paths internally disjoint with $C$. Since every
two vertices of $C$ are connected
by a Hamiltonian path of $G[V(C)]$ and $|V(C)|\geq k-1$, $C$ is a
component of size at least $k-1$. Recall that the longest $(u,v)$-path
in $G$ contains at most $k+1$ vertices. Hence $H$ is a component of $k-1$
vertices; since otherwise there is some vertex of $H$ outside $C$,
and there is an $x_1$-path including all vertices of $C$ and at least one
vertex in $V(H)\backslash V(P)$, contradicting the choice of $x_1$-path.
Since there is at most one vertex of $G^\prime$ with degree
less than $k-2$ in $G^\prime$, $G^\prime[V(C)]$ is
a complete graph.

\medskip

(a.2). $G[V(C)]$ is an end-block with the
cut-vertex $x_s$ ($s\neq 1$). Since $G$ is 2-connected, $u$ and
$v$ are joint to $H-V(C)$ and $C-x_s$ by two independent edges,
respectively. Since every two vertices of $C$ are connected  by a
Hamiltonian path of $G[V(C)]$ and $|V(C)|\geq k-1$, one can easily
find a path starting from $u$ and ending at $v$ which
contains at least $k+2$ vertices, a
contradiction.

Therefore, for each case of (a.1) and (a.2), each component of $G^\prime$
is a complete graph on $k-1$ vertices or an isolated vertex (when
$d_{G^\prime}(x_1)=0$). Thus $G=K_2+(\ell  K_{k-1}\cup K_1)$ (if $d_{G^\prime}(w)=0$),
or $G-\{u,v\}=\ell  K_{k-1}$ (if $d_{G^\prime}(w)\neq 0$ or $w$ does not
exist) for some $\ell\geq 1$. For the latter case, $u$ and $v$ are adjacent to
each vertex of $V(G)-\{u,v,w\}$.

\medskip

(b). $|C|\geq 2k-3$. Since $G$ is 2-connected, by Menger's theorem,
$u$ and $v$ are adjacent to $C$ by two vertex-disjoint paths. Thus,
there is a path starting from $u$ and ending at $v$ on at least
$k+2$ vertices, a contradiction.

\medskip

(c). $|C|=2k-4$. Let $C=y_1y_2\ldots y_{2k-4}y_1$, where $y_1=x_s$
and $y_{2k-4}=x_t$. Let $y_r=y_{r'}$ when $r\equiv r'$ (mod) $2k-4$. Since $G$
is 2-connected, $u$ and $v$ are joint to $C$ by two vertex-disjoint
paths with two end-vertices $y_i$ and $y_j$. Without loss of generality,
assume $j>i$. Thus, we have $j-i=k-2$, since otherwise
there is a path from $u$ to $v$ on at least
$k+2$ vertices, a contradiction. By the choice of $P$ and $C$,
we have $N_{G^\prime}(y_2)\subseteq V(C)$ and
$N_{G^\prime}(y_{2k-4})\subseteq V(C)$. If $k\geq 5$, then
either $y_{i+1}y_{i+2}\ldots y_{j-1}$ or $y_{j+1}y_{j+2}\ldots y_{i-1}$
contains at least one vertex of $\{y_2,y_{2k-4}\}$. Without loss of
generality, let $y_2\in V(y_{i+1}y_{i+2}\ldots y_{j-1})$. Notice that
$y_{i+1}y_{i+2}\ldots y_{j-1}$ contains exactly $k-2$ vertices. Since
$d_{G}(y_2)\geq k$ and $y_2$ is nonadjacent to $u$ and $v$,
$y_2$ is adjacent to at least two vertices, say, $y^\prime$ and $y^{\prime\prime}$,
of $y_{j+1}y_{j+2}\ldots y_{i-1}$. Let $y^\prime$ precede $y^{\prime\prime}$
on $y_{j+1}y_{j+2}\ldots y_{i-1}$. Then either
$uy_iy_{i+1}\ldots y_2y^\prime\ldots y_{j-2}y_{j-1}y_jv$
or $uy_iy_{i-1}\ldots y^{\prime\prime}y_2\ldots y_{j+2}y_{j+1}y_jv$
is a path on at least $k+2$ vertices, a contradiction. Now let $k=4$.
Then $C=y_1y_2y_3y_4y_1$, where $y_1=x_s$. We may suppose that $u$ is
adjacent to $y_2$ and $v$ is adjacent to $y_4$, otherwise we are done by
a similar argument as before for $k\geq 5$. Then $y_1$ is a cut vertex of
$H$, since otherwise there is a $(y_1,y_2)$-path of $H$ containing
all vertices in $C$, and so there is a path from $u$ to
$v$ on at least 6 vertices, a contradiction. Now it follows that
$u$ or $v$ is adjacent to $H-V(C)$ since $G$ is
2-connected, and also creates a $(u,v)$-path on at least
6 vertices, a contradiction.

We finish the proof of the lemma.  \end{proof}

Now we are ready to prove Theorem~\ref{Thm:stability-cycle}.

\medskip

\noindent{\bf Proof of Theorem~\ref{Thm:stability-cycle}.}
The theorem holds trivially for $k=2$. Let $k\geq 3$.
By Theorem~\ref{Thm:B72}, if $n\leq 2k$, then $G$ is Hamiltonian, a contradiction.
Thus we may assume $n\geq 2k+1$.
Since the degree of each vertex of a graph does not decrease after
adding edges, we may further suppose that $G$ is a maximal non-Hamiltonian graph
with $c(G)\leq 2k+1$ (in the sense that the addition of any new
edge to $G$ creates a cycle of length at least $2k+2$ when $n\geq 2k+2$ and a cycle of length $2k+1$ when $n=2k+1$). 
The case for $n=2k+1$ is much easier than the case for $n\geq 2k+2$, hence we skip the proof for $n=2k+1$ and assume $n\geq 2k+2$.

Let $y$
be the unique vertex of $G$ with degree less than $k$ and if there
is no such vertex, we choose $y$ arbitrarily. Choose a maximum path $P=x_1x_2\ldots x_m$
such that the number of vertices of it with degree less than $k$
is minimum, that is, if there is a path on $m$ vertices without
containing $y$, then we will choose this path. 
Let $\mathcal{P}$ be the set of paths suitable for our choice.
By the maximality of $G$, we have $m\geq 2k+2$. Since $d_G(y)\geq 2$ (recall $G$ is 2-connected), we may choose $P$ with $d_G(x_1)\geq k$ and $d_G(x_m)\geq k$. 
In fact, if $y=x_1$, then $y$ is adjacent to $x_2$ and $x_i$.
Hence, we can consider the path $x_{i-1}Pyx_iPx_m$.
Let $N_P^-(x_1)=\{x_w:x_{w+1}\in N_P(x_1)\}$ and $N_P^+(x_{m})=\{x_w:x_{w-1}\in N_P(x_{m})\}$. 
Since $c(G)\leq 2k+1<m$, we have
\begin{equation}\label{neighbour of end vertices}
N_P^-(x_1)\cap N_P(x_{m})=\emptyset \mbox{ and }N_P^+(x_{m})\cap N_P(x_1)=\emptyset.
\end{equation}
Let $g=\max\{w:x_w\in N_P(x_1)\}$ and $h=\min\{w:w\in N_P(x_{m})\}$.
The proof of the following claim is standard.
\medskip

\noindent{\bf Claim.} $g\geq h$.

\medskip

\begin{proof} Since $G$ is 2-connected, there exists a path $Q_1$
such that it intersects $P$ with exactly two vertices $x_{s_1}$,
$x_{t_1}$ and $s_1<g<t_1$. Choose such path with $t_1$ as large
as possible. If $t_1>h$, then we stop. If $t_1<h$, then we choose
a path $Q_2$ such that it intersects $P$ with exactly two vertices
$x_{s_2}$, $x_{t_2}$ and $s_2<t_1<t_2$. Choose such a path with
$t_2$ as large as possible. Since we choose $t_1$ as large
as possible, we get $Q_1\cap Q_2 =\emptyset$ and $s_2\geq g$. If $t_2>h$,
then we stop. Otherwise, we may go on this procedure and
get a path $Q_{r}$ such that  it intersects $P$ with exactly
two vertices $x_{s_{r}}$, $x_{t_{r}}$ and $s_{r}<t_{r-1}\leq h<t_{r}$.
Moreover, for any $Q_i$ and $Q_j$ with $i<j$, either
$Q_i\cap Q_j =\emptyset$ or $Q_i\cap Q_j=s_i=t_{i+2}$ for $j=i+2$. Let
$$
i_0=\min\{w>s_1:x_w\in N_P(x_1)\}  \mbox{ and } j_0=\max\{w<t_r:x_w\in N_P(x_{m})\}.
$$
Let $r$ be odd. Since there is no $i^{\prime\prime}$ such that
$x_{i^{\prime\prime}}\in N_P(x_{1})$ and $x_{i^{\prime\prime}}\in N_P(x_{m})$,
$$
x_1Px_{s_1}Q_1x_{t_1}Px_{s_3}Q_3x_{t_3}\ldots x_{s_{r}}Q_{r}x_{t_{r}}Px_{m}x_{j_0}Px_{t_{r-1}}Q_{r-1}x_{s_{r-1}}\ldots x_{t_2}Q_2x_{s_2}Px_{i_0}x_1
$$
is a cycle of length at least $2k+2$, a contradiction. Let $r$ be even. Then
$$
x_1Px_{s_1}Q_1x_{t_1}P x_{s_3}Q_3x_{t_3} \ldots x_{s_{r-1}}Q_{r-1}x_{t_{r-1}}P x_{j_0}x_{m}P x_{s_{r}}Q_{r}x_{t_{r}}\ldots x_{t_2}Q_2x_{s_2}P x_{i_0}x_1
$$
is a cycle of length at least $2k+2$, a contradiction. The proof
of the claim is complete.\end{proof}

Let $U=\{x_1\}\cup N_P(x_1)\cup N_P^+(x_{m})\setminus \{x_{i+1}\}$.
By $d_G(x_1)\geq k$, $d_G(x_m)\geq k$ and (\ref{neighbour of end vertices}), $|U|\geq 2k$.
Let $V_1=V(x_1Px_i)$ and $V_2=V(x_jPx_m)$.
Clearly, $x_1Px_ix_mPx_jx_1$ is a cycle of length at most $2k+1$, implying $|V_1\cup V_2|\leq 2k+1$.
Since $U \subseteq V_1\cup V_2$, we have  $|(V_1\cup V_2)\setminus U|\leq 1$.

We consider the following two cases.

\medskip

\noindent{\bf Case 1.} There exists a pair $(i,j)$ ($i<j$) satisfying the following:
\begin{equation}\label{i and j}
x_i\in N_P(x_{m}), x_j\in N_P(x_1) \mbox{ and } x_{w}\notin N_P(x_1)\cup N_P(x_m) \mbox{ for each }1\leq i<w<j\leq m.
\end{equation}
Let
$$s=\min\{w:x_{w}\in N_P(x_m)\} \mbox{ and } t=\max\{w:x_{w}\in N_P(x_1)\}.$$

We consider the following two subcases:

\medskip

\noindent{\bf Subcase 1.1.} For each maximum path in $\mathcal{P}$, there is at most one pair $(i,j)$ satisfying (\ref{i and j}).

\medskip

Subject to our choice of $P$, we choose $P$ with only one pair $(i,j)$ satisfying (\ref{i and j}) such that $j-i$ as small as possible.
Let $k=3$ and $j-i=2$. Then $P=x_1x_2\ldots x_8$.
Without loss of generality, let $i=4$ and $j=6$. Since $c(G)\leq 7$,
$x_5$ is not adjacent to $x_1$, $x_3$, $x_7$ and $x_8$. We claim
that $G-V(P)$ is an independent set and each vertex of $G-V(P)$ can only be adjacent to $x_2$, $x_4$ and $x_6$.
Otherwise, we can easily find a longer path, a contradiction.
Let $n\geq 9$.
Since there are at most one vertex with degree two in $G$, either $z\in V(G)-V(P)$ or $x_5$ is adjacent to all of  $x_2$, $x_4$, $x_6$.
Hence, $x_1,x_3,x_5$, the edge $x_7x_8$ can only be adjacent to $x_2,x_4,x_6$ (otherwise there is a cycle of length 8) and $G=H(n,8,3)$ by the maximality of $G$.
Thus  $x_1$ is adjacent to $x_4$ and $x_8$ is adjacent to $x_2$, contradicting the fact $P$ has only one pair $(i,j)$ satisfying (\ref{i and j}). 
Let $n=8$. 
If $x_2$ is adjacent to $x_5$, then by $c(G)\leq 7$, there is no edge between $x_1,x_3,x_5$ and $x_7,x_8$.
Hence $G=H(8,8,3)$, a contradiction as before.
Thus $d_G(x_5)=2$ and other vertices in $G$ have degree at least $3$.
If $x_8$ is adjacent to $x_2$, then since $P$ has only one pair $(i,j)$ satisfying (\ref{i and j}), $x_1$ is not adjacent to $x_4$.
Moreover, by $c(G)\leq 7$, $x_1$ is not adjacent to $x_3,x_5,x_7,x_8$, implying $d_G(x_1)\leq 2$, a contradiction.
Hence $x_8$ is not adjacent to $x_2$.
Similarly, $x_8$ is not adjacent to $x_3$ (otherwise $d_G(x_7)\leq 2$, a contradiction), whence $x_8$ is only  adjacent to $x_4,x_6$ and $x_7$.
Consider the path $x_3x_2x_1x_6x_5x_4x_8x_7$. Since $x_7$ is adjacent to $x_6$ and $x_3$ is adjacent to $x_4$, $x_7$ is only adjacent to $x_4,x_6$ and $x_8$.
Thus $G=K_2+(K_3\cup K_2\cup K_1)$ by the maximality of $G$ ($x_1,x_2,x_3$ induce a triangle in $K_3\cup K_2\cup K_1$).

Now, let $k\geq 4$ or $j-i\geq 3$.
We will prove the following two claims.

\medskip

\noindent {\bf Claim 1.} $s=i$ and $j=t$.

\medskip

\begin{proof} Let $j-i=2$ and $k\geq 4$. Then $m=2k+2$, otherwise $x_1Px_ix_mPx_jx_1$ is a cycle on at least $2k+2$ vertices, a contradiction.
Moreover, $x_{i+1}$ is nonadjacent to any vertex of $G-V(P)$; otherwise there is a path on at least $m+1$ vertices, a contradiction.
Let $x_{\ell}\in N_P^-(x_1)\cap V_1$. Then $x_{i+1}$
is nonadjacent to $x_{\ell}$. Otherwise,
$x_{i+1}Px_mx_iP x_{\ell+1}x_1Px_{\ell}x_{i+1}$ is a cycle on $2k+2$
vertices, a contradiction. Similarly, $x_{i+1}$ is nonadjacent to any
vertex of $(N_P^-(x_1)\cup N_P^-(x_m))\cap V_1$ and
$(N_P^+(x_1)\cup N_P^+(x_m))\cap V_2$. Since $d_G(x_1)=d_P(x_1)\geq k$,
$d_G(x_m)=d_P(x_m)\geq k$ and $m=2k+2$, recall that there is only one pair $(i,j)$ satisfying (\ref{i and j}), we have
$$|(V_1\setminus (N_P^-(x_1)\cup N_P^-(x_m)))   \cup (V_2\setminus(N_P^+(x_1)\cup N_P^+(x_m)))|\leq 3.$$
Thus, we have $d_G(x_{i+1})\leq 3<k$ and $x_{i+1}=y$. Hence, each vertex of
$V(P)\setminus \{x_{i+1}\}$ is of degree at least $k$ in $G$.

Since $U\subseteq V_1\cup V_2$,
$|V_1\cup V_2| \leq 2k+1$ and $|U|\geq 2k$, without loss of
generality, we may suppose that $V_2\subseteq U$, i.e., $V_2\cap U=V_2$. Suppose that $x_m$
is adjacent to consecutive vertices $x_{\ell}$ and $x_{\ell+1}$ of $V_1$.
Let $P^\prime=x_{t+1}Px_mx_tPx_1$.
Note that the neighbours of $x_1$ in $P^\prime$ are determined.
By (\ref{neighbour of end vertices})
and $d_G(x_{t+1})=d_{P^\prime}(x_{t+1})\geq k$, without loss of generality, $x_{t+1}$ is
adjacent to  $x_{\ell}$. Thus, $x_mx_{\ell+1}Px_jx_1Px_{\ell}x_{t+1}Px_m$
is a cycle on $2k+2$ vertices, a contradiction. Hence, $x_m$ 
cannot be adjacent to both of consecutive vertices of $V_1$ (keep this proof in mind, we
will frequently use the idea of this proof).

We will first show that $s=i$.
From the last paragraph, $x_m$ is not adjacent to $x_{i-1}$.
Suppose that $i\neq s$.
Since there is only one pair $(i,j)$ satisfying (\ref{i and j}), $x_1$ is not adjacent to $x_i$.
By $|V_1\setminus U|\leq 1$, $x_m$ is adjacent to $x_{i-2}$ and $x_1$ is adjacent to each vertex of $V(x_2Px_{i-2})$.
Consider the path $x_{\ell}Px_mx_{\ell-1}Px_1$ for $j+1\leq\ell\leq m-1$.
By (\ref{neighbour of end vertices}) and the fact $y\notin V(x_jPx_m)$, as in the previous proof, $x_\ell$ is adjacent to two non-consecutive vertices of $\{x_{i-2},x_{i-1},x_i\}$, that is $x_\ell$ is adjacent to   $x_{i-2}$ and $x_i$.
Thus for $j+1\leq\ell\leq m$, $x_{\ell}$ is adjacent to each vertex of $V(x_iPx_m)\setminus \{x_\ell\}$.
Moreover,  $x_{i-1}$ is nonadjacent to
$V(x_1Px_{i-3})\cup\{x_{i+1}\}$, otherwise,
it is not hard to find a cycle on $2k+2$ vertices, a contradiction.
Furthermore, each vertex of $G-V(P)$ is nonadjacent to $x_{i-1}$,
otherwise there is a path on at least $m+1$ vertices, a contradiction.
Thus $d_G(x_{i-1})\leq k-1$, contradicting that there is at most
one vertex of $G$ with degree less than $k$. We conclude that $s=i$.

Now, we will show that $j=t$. We will show that $x_1$ cannot be adjacent
to both of consecutive vertices of $V_2$ in the following two cases:

(1.a). $x_i\in U$, i.e., $x_i\in N_P(x_1)$. 
Consider the path $x_{i-1}Px_1x_iPx_m$.
Since $d_G(x_{i-1})\geq k$, by an argument similar as the previous one (consider the neighbors of $x_{i-1}$ and $x_1$ in $P$), $x_1$ cannot be adjacent
to both of  consecutive vertices of $V_2$.

(1.b). $x_i\notin U$, i.e., $x_i\notin N_P(x_1)$.
Then $|V_1\cup V_2|=2k+1$ and $d_G(x_1)=d_G(x_m)=k$.
Suppose that $x_1$ is adjacent to consecutive vertices of $V_2$.
As $V_2\cap U=V_2$, $x_1$ is adjacent to all vertices of $x_{j}Px_{t}$.
First, $x_{i-1}$ is nonadjacent to  $z\in G-V(P)$, otherwise $zx_{i-1}Px_1x_{j}Px_ix_mPx_{j+1}$ is a path
on $m+1$ vertices, a contradiction.
Also, $x_{i-1}$ is nonadjacent to $x_{i+1}$;
otherwise, there is a cycle $x_{i-1}Px_1x_jPx_mx_ix_{i+1}x_{i-1}$ on $2k+2$ vertices, a contradiction.
Next, we will show that $N_P[x_w]=N_P[x_m]$ for $t+1\leq w\leq m-1$.
Suppose not and choose $q$ maximum such that $N_P[x_q]\neq N_P[x_m]$.
Thus for $x_{q+1}\in V(x_{t+1}Px_m)$, we have $N_P[x_{q+1}]=N_P[x_m]$.
Considering the path $x_{w}Px_mx_{w-1}Px_1$  for $t+1\leq w\leq m-1$, by (\ref{neighbour of end vertices}), $x_w$ can only be adjacent to $x_{i-1}$ in $V(x_1Px_{i-1})$, hence $x_q$ is adjacent to $x_{i-1}$.
We can find the cycle $x_qPx_{t+1}x_{q+1}Px_mx_iPx_tx_1Px_{i-1}x_q$ is a cycle on $2k+2$ vertices, a contradiction.
Therefore, we have $N_P[x_w]=N_P[x_m]$ for $t+1\leq w\leq m-1$, implying that $x_{i-1}$ is nonadjacent to each vertex of $V(x_{t+1}Px_m)$.
The paths $x_{i-1}Px_1x_wPx_ix_mPx_{w+1}$ and $x_{i-1}Px_1x_wPx_mx_jPx_{w-1}$ for $j \leq w\leq t$ imply that $x_{i-1}$ is not adjacent to any vertices of $x_jPx_t$.
Hence, $d_G(x_{i-1})\leq k-1$ ($d_G(x_1)=k$, $x_i\notin N_P(x_1)$ and $|V(x_jPx_t)|\geq 2$), a contradiction. 
Thus $x_1$ cannot be adjacent to both of consecutive vertices of $V_2$.

Since $V_2\subseteq U$, in both of (a) and (b), we have $j=t$ (recall that there is only one pair $(i,j)$ satisfying (\ref{i and j})). We
finish the proof of the claim for the case when $j-i=2$ and $k\geq 4$.

\medskip

Let $j-i\geq 3$.
We consider the following two cases:

\medskip

(2.a). $U=V_1\cup V_2$. Suppose that $x_m$ is adjacent to consecutive
vertices $x_{\ell}$ and $x_{\ell+1}$ of $V_1$. Considering the
path $x_{t+1}Px_mx_tPx_1$ (or $x_{t+2}Px_mx_{t+1}Px_1$ \footnote{It is possible that $x_{t+2}=x_m$.} when
$y=x_{t+1}$), by (\ref{neighbour of end vertices}), $x_{t+1}$ (or $x_{t+2}$) is adjacent to at least
one of $x_{\ell}$ and $x_{\ell+1}$. Thus, $G$ contains a cycle
$x_{t+1}x_{\ell+1}Px_tx_1Px_{\ell}x_mPx_{t+1}$
(or $x_{t+2}x_{\ell+1}Px_tx_1Px_{\ell}x_mPx_{t+2}$) of
length at least $2k+2$ (note that $j-i\geq 3$), a contradiction.
Hence $x_m$ cannot be adjacent to both of consecutive vertices of $V_1$.
Since $V_1\subseteq U$ and there is only one pair satisfying (\ref{i and j}), we have $s=i$. Similarly, we have $j=t$.

\medskip

(2.b). $|U|=|V_1|+|V_2|-1$, i.e., $|V_1\cup V_2|=2k+1$. Without loss of generality, let
$\{x_h\}=V_1\setminus U$. We will first show that $s=i$. As case (a), $x_m$ is nonadjacent
to consecutive vertices of $V_1$. Suppose that $x_m$ is
adjacent to $x_i$ and $x_{i-2}$  (otherwise, we have $s=i$) and $x_1$ is adjacent to
each vertex of $V(x_2Px_{i-2})$.
Considering the path $x_{i-3}Px_1x_{i-2}Px_m$ (or
$x_{i-4}Px_1x_{i-3}Px_m$ when $x_{i-3}=y$),  as the
previous argument in (2.a), $x_1$ cannot be adjacent to both of consecutive
vertices of $V_2$. Thus $j=t$ (here we suppose that
$s\neq i$). Hence, $x_{j+1}$ is nonadjacent to $x_{i-1}$,
otherwise $x_{i-1}Px_jx_1Px_{i-2}x_mPx_{j+1}x_{i-1}$ is
a cycle on at least $2k+2$ vertices, a contradiction.
Now considering the path $x_{j+1}Px_mx_jPx_1$, since $|V_1\cup V_2|=2k+1$,
$x_{j+1}$ is adjacent to two non-consecutive vertices of $\{x_i,x_{i-1},x_{i-2}\}$, that is both of $x_i$ and $x_{i-2}$.
Hence $x_{i-2}Px_1x_jPx_ix_mPx_{j+1}x_{i-2}$ is a cycle
on at least $2k+2$ vertices (note that $j-i\geq 3$).
This contradiction shows that $s=i$.

Form the last paragraph, $N_P[x_1]\cup \{x_h\}=V(x_1Px_i)\cup V(x_jPx_t)$ and $N_P[x_m]=V(x_tPx_m)\cup \{x_i\}$.
Suppose that $j<t$.
Then $x_1$ is adjacent to all vertices of $x_jPx_{t}$.
Consider the path $x_\ell Px_mx_{\ell-1}Px_1$ for $t+1\leq \ell\leq m-1$.
The vertex $x_\ell$ cannot be adjacent to consecutive vertices of $V_1$.
Recall that $|V_1\cup V_2|=2k+1$.
If $h=i$, then since we choose $j-i$ as small as possible, $x_\ell$ can only be adjacent to $x_i$ in $V(x_1Px_j)$ (let $x_q$  be adjacent to $x_{i-1}$ with $t+1\leq q\leq m-1$ and $q$ is maximum, we can find a cycle of length at least $2k+2$ as in $(1.b)$);
if $h\neq i$, then $x_\ell$ also can only be adjacent to $x_i$ in $V(x_1Px_j)$  (if $x_\ell$ is adjacent to $x_{h-1}$, then $x_{h-1}Px_1x_{h-2}Px_{\ell-1}x_mPx_\ell x_{h-1}$ is a cycle of length at least $2k+2$, a contradiction).
Thus $N_P[x_m]=N_P[x_\ell]$ for $t+1\leq \ell\leq m-1$, implying that $G[V(x_{t+1}Px_m)\cup\{x_i\}]$ is a complete graph.

If $h\neq i$, considering the path
$x_{i-1}Px_1x_iPx_m$ (or $x_{i-2}Px_1x_iPx_m$\footnote{$x_{i-2}Px_1x_iPx_m$ is a maximal path, i.e., all neighbors of $x_{i-2}$ and $x_m$ belongs to this path, otherwise we find a path of length $m$ without containing $y$, a contradiction.} when
$x_{i-1}=y$), then $x_{i-1}$ (or $x_{i-2}$) is adjacent to one of $x_j$ and $x_{j+1}$.
Without loss of generality, let $x_{i-1}$ (or $x_{i-2}$) be adjacent to $x_{j+1}$.
Hence,  $x_{i-1}x_{j+1}Px_mx_iPx_jx_1Px_{i-1}$ or $x_{i-2}x_{j+1}Px_mx_iPx_jx_1P x_{i-2}$ is a cycle on at least $2k+2$ vertices ($j-i\geq 3$), a contradiction.

If $h=i$, considering the path $x_{i-1}Px_1x_tPx_ix_mPx_{t+1}$
(or $x_{i-2}Px_1x_tPx_ix_mPx_{t+1}$  when $x_{i-1}=y$),
then, without loss of generality, $x_{i-1}$ is adjacent to $x_{j+1}$.
As in the case $h\neq i$, we can also get a cycle on at least $2k+2$ vertices, a contradiction.
The proof of the claim is complete.\end{proof}

\medskip

\noindent {\bf Claim 2.}
Each vertex in $G-V_1\cup V_2$ is
nonadjacent to $(V_1\cup V_2)\setminus \{x_i,x_j\}$.

\medskip

\begin{proof}
Let $z\in G-V_1\cup V_2$.
If $U=V_1\cup V_2$, then the assertion holds trivially;
otherwise there is a path on $m+1$ vertices when $z\in G-V(P)$ or a
cycle on $m$ vertices when $z\in V(x_{i+1}Px_{j-1})$, a contradiction.
Without loss of generality, let $x_h\in V_1\setminus U$, i.e., $x_h\notin N_P(x_1)$.
As in the above argument, $z$ is nonadjacent to $(V_1\cup V_2)\setminus \{x_i,x_j,x_{h-1}\}$.
Suppose that $z$ is adjacent to $x_{h-1}$.
We divide the proof into the following two cases.

\medskip

(3.a). $z\in G-V(P)$.
(3.a.1). $h=i$. Then $zx_{i-1}Px_1x_jPx_ix_mPx_{j+1}$ is a path on $m+1$ vertices,
a contradiction.
(3.a.2) $h\neq i$. Then $d_P(x_h)\geq k$, since otherwise
$zx_{h-1}Px_1x_{h+1}Px_m$ is a path on $m$ vertices without
containing $y$, contradicting the choice of $P$.
Thus, considering the path $x_hPx_1 x_{h+1}Px_m$, since the neighbors of $x_m$ in this path are determined, by  (\ref{neighbour of end vertices}) we have $N_P(x_1)=N_P(x_h)$, and so   $zx_{h-1}x_hx_{h-2}Px_1x_{h+1}Px_m$
is a path on $m+1$ vertices, a contradiction.

\medskip

(3.b). $z\in V(x_{i+1}Px_{j-1})$. If $h=i$, then
$zPx_ix_mPx_jx_1x_{h-1}z$ is a cycle on $2k+2$ vertices,
a contradiction. Now we may suppose that  $h\neq i$.
(3.b.1). $k\geq 4$ and $j-i=2$.
Then $d_G(x_{i+1})\leq k-1$ as in the proof of Claim 1,
and so $x_{i+1}=y$. So $d_P(x_h)\geq k$ and  $x_h$ is adjacent
to $x_{h-2}$. Thus $x_{i+1}x_{h-1}x_hx_{h-2}Px_1x_{h+1}Px_ix_mPx_{i+1}$
is a cycle on at least $2k+2$ vertices, a contradiction.
(3.b.2) $j-i\geq 3$.
Then either $zx_{h-1}Px_1x_hPx_ix_mPz$ or  $zPx_ix_mPx_jx_hPx_{i-1}x_1Px_{h-1}z$ (as in (3.a), we have $N_P(x_1)=N_P(x_h)$)
is a cycle on at least $2k+2$
vertices, a contradiction.
The proof is complete.
\end{proof}

By the maximality of $P$, the longest
path starting from $x_i$ through $G-V_1\cup V_2$ ending
at $x_j$ is on at most $j-i+1$ vertices.
If $i=k+1$, then since $c(G)\leq 2k+1$, we have $j-i+1\leq k+1$.
Let $i=k.$
Then we still have
$j-i+1\leq k+1$. Otherwise, if $j-i+1\geq k+2$, then
$x_{i+1}Px_mx_iPx_1$ (or $x_{j-1}Px_1x_jPx_m$ when $x_{i+1}=y$)
with the pair $(x_i,x_j)$ contradicts the choice of $(i,j)$
(recall we choose $j-i$ as small as possible).
Thus, by Claim 2, it is easy to see that
$G^\ast=G-V_1\cup V_2 \setminus \{x_i,x_j\}$ is 2-connected
(note that $x_ix_j\in E(G)$ by the maximality of $G$) such that the longest
path starting from $x_i$ ending at $x_j$ is on at most
$k+1$ vertices. Now applying
Lemma~\ref{minimum degree without a-b path}
for $G^\ast$, we have $G=K_2+(t  K_{k-1}\cup K_1)$
or $G= K_2+((t-1) K_{k-1}\cup K_k\cup K_1)$.

\medskip

\noindent{\bf Subcase 1.2.} There is a path with at least two pairs $(i,j)$ and $(i^\prime,j^\prime)$ satisfying (\ref{i and j}).

\medskip

Since $x_1Px_{i^\prime} x_mPx_{j^\prime}x_1$ is a cycle on at most $2k+1$ vertices, we have $j^\prime-i^\prime\geq m-2k$ and $j-i\leq 3$. Similarly, we get $j-i\geq m-2k $ and $j^\prime-i^\prime\leq 3$. Thus $m\leq 2k+3$.

\medskip

(a). $m=2k+3$. Then $j-i=j^\prime-i^\prime=3$. Clearly $x_1Px_{i} x_mPx_{j}x_1$ and $x_1Px_{i^\prime} x_mPx_{j^\prime}x_1$ are cycles on $2k+1$ vertices.  Without loss of generality, suppose that $d_G(x_{i+1})\geq k$ and $d_G(x_{i+2})\geq k$. Clearly, there is no vertex of $G-V(P)$ which is adjacent to $x_{i+1}$ or $x_{i+2}$. Otherwise there is a path on at least $m+1$ vertices, a contradiction. Hence, $d_C(x_{i+1})\geq k-1$ and $d_C(x_{i+2})\geq k-1$, where $C= x_1Px_{i} x_mPx_{j}x_1=y_1y_2\ldots y_{2k+1}y_1$. Let $y_u=y_v$ where $u\equiv v$ (mod) $2k+1$. For each $y_{q}\in N_C(x_{i+1})$, we have the following:
\begin{equation}\label{eq 1 for lemma}
\mbox{each vertex of } G-V(C) \mbox{ cannot be adjacent to } y_{q-1}\mbox{ and }y_{q+1},
\end{equation}
and
\begin{equation}\label{eq 2 for lemma}
x_{i+2}  \mbox{ is nonadjacent to } y_{q-2},y_{q-1},y_{q+1} \mbox{ or }y_{q+2}.
\end{equation}
Otherwise, there is a path on at least $m+1$ vertices or a cycle of length at least $2k+2$, a contradiction.

Let $1\leq t\leq 2k$. We say that an ordered pair of vertices $(y_{\ell},y_{\ell+t})$ of $C$ {\it adhere to} $x_i$ if $y_{\ell}\in N_C(x_i)$ and $y_{\ell+t}\in N_C(x_i)$ but $y_{\ell+w}\notin N_C(x_i)$ for $w=1,\ldots,t-1$. Since $d_C(x_{i+1})\geq k-1$, by (\ref{eq 1 for lemma}), we consider the following four cases according to
the situations that ordered pairs $(y_{\ell},y_{\ell+t})$ adhere to $x_{i+1}$ with $t\geq3$:

(a.1). There is only one  ordered pair $(y_{\ell_1},y_{\ell_1+3})$ adhering to $x_{i+1}$ ($d_C(x_{i+1})=k$). Then $x_{i+2}$ is nonadjacent to any vertex of $C$ by (\ref{eq 2 for lemma}), a contradiction.

(a.2). There are three ordered pairs  $(y_{\ell_1},y_{\ell_1+3})$, $(y_{\ell_2},y_{\ell_2+3})$ and $(y_{\ell_3},y_{\ell_3+3})$ adhering to $x_{i+1}$ ($d_C(x_{i+1})=k-1$).
By (\ref{eq 1 for lemma}) and (\ref{eq 2 for lemma}), we have $m=11$, i.e., $k=4$, otherwise, $d_G(x_{i+2})\leq k-1$, a contradiction. Without loss of generality, let $x_{i+1}$ and $x_{i+2}$ be adjacent to all of $y_1$, $y_4$ and $y_7$. Since $G$ is 2-connected, each vertex of $G- \{y_1,y_4,y_7\}$ can only be adjacent to $y_1$, $y_4$ and $y_7$, otherwise $c(G)\geq 10$, a contradiction. Thus $G\subseteq K_3+M_{n-3}$.

(a.3). There are two ordered pairs $(y_{\ell_1},y_{\ell_1+3})$ and $(y_{\ell_2},y_{\ell_2+4})$ adhering to $x_{i+1}$ (as $d_C(x_{i+1})=k-1$).
By (\ref{eq 1 for lemma}) and (\ref{eq 2 for lemma}), we have $m=9$, i.e., $k=3$, otherwise, $d_G(x_{i+2})\leq k-1$, a contradiction.
Without loss of generality, let $x_{i+1}$ and $x_{i+2}$ be adjacent to both of $y_1$ and $y_4$. Each vertex of $G- (V(C)\cup\{x_{i+1},x_{i+2}\})$ can only be adjacent to $y_1$, $y_4$ and $y_6$, otherwise there is a path on 10 vertices, contradicting the maximality of $P$. If $y_5$ is adjacent to $y_7$, then each vertex of $G-(V(C)\cup\{x_{i+1},x_{i+2}\})$ can only be adjacent to $y_1$ and $y_4$. Note that each path in $G$ contains at most 9 vertices, by an easy observation, we have $G= K_2+(K_3\cup M_{n-5})$. If $y_5$ is not adjacent to $y_7$, then each non-isolated vertex of $G-(V(C)\cup\{x_{i+1},x_{i+2}\})$  can only be adjacent to $y_1$ and $y_4$. The isolate vertex in $G-(V(C)\cup\{x_{i+1},x_{i+2}\})$ can be adjacent to $y_1$, $y_4$ and $y_6$. Thus  by an easy observation, we have $G=K_2+(S_s\cup M_{n-s-2})$ (mapping $y_6$ to the center of $S_s$).

(a.4) There is an ordered pair $(y_{\ell},y_{\ell+5})$ adhering to $x_{i+1}$ ($d_C(x_{i+1})=k-1$).
Then $x_{i+2}$ is nonadjacent to any vertex of $C$ by (\ref{eq 2 for lemma}), a contradiction.

\medskip

(b). $m=2k+2$. Suppose that there is a cycle, say $C_{2k+1}=y_1y_2\ldots y_{2k+1}y_1$,
on $2k+1$ vertices and a vertex  with degree $k$, say $x$, which does not belong
to this cycle. Then $G-V(C)=\overline{K}_{n-2k-2}$, otherwise there is a path
on at least $m+1$ vertices, a contradiction. Since $c(G)\leq 2k+1$, $x$ cannot
be adjacent to consecutive vertices of $C_{2k+1}$.
Let $N_G(x)=\{y_1,y_4,y_6,\ldots,y_{2k-2},y_{2k}\}$. There is only one
edge $y_2y_3$ in $G[V(C_{2k+1})\setminus N_G(x)]$, otherwise $c(G)\geq 2k+2$,
a contradiction. Moreover, each vertex of $G-C_{2k+1}$ is nonadjacent to
$V(C_{2k+1})\setminus N_G(x)$. Thus $G\subseteq H(n,2k+2,k)$. Now we may
suppose that $j-i=3$, $j^\prime-i^\prime=2$, $n=m=2k+2$ and $d_G(x_{i^\prime+1})<k$.
Moreover, there are only two pairs $(i,j)$ and $(i^\prime,j^\prime)$
satisfying (\ref{i and j}).
Otherwise, consider the pair $(i^{\prime\prime},j^{\prime\prime})$.
There is a cycle on $2k+1$ vertices
and a vertex which does not belong to this cycle with degree at least $k$. Hence,
we are done by the previous argument. Since $j-i=3$, by (\ref{neighbour of end vertices}),
we have $U=V(x_1Px_i)\cup V(x_jPx_{2k+2})$ and $d_P(x_1)=d_P(x_{2k+2})=k$.
Consider the paths $x_{s-1}Px_1x_sPx_{2k+2}$ and $x_{t+1}Px_{2k+2}x_tPx_1$.
The vertex $x_1$ cannot be adjacent to both of consecutive vertices of $V_1$ and $x_{2k+2}$
cannot be adjacent to both of consecutive vertices of $V_2$. Thus we have $i^\prime =k-1$,
$j^\prime=i=k+1$ and $j=k+4$. Moreover, $N_P[x_{\ell}]=N_P[x_1]$ for $2\leq\ell\leq k-2$
and $N_P[x_{\ell}]=N_P[x_{2k+2}]$ for $k+5\leq\ell\leq2k+1.$ Hence,
$x_{k+2}$ is nonadjacent to $V(x_1Px_{k-2})\cup V(x_{k+5}Px_{2k+2})\cup \{x_k\}$,
which implies $d_G(x_{k+2})\leq 4.$
Thus we have $k\leq 4$. By a direct observation ($c(G)\leq 2k+1$), we have $G=H(8,8,3)$ for $k=3$,
and $G=K_3+M_7$ for $k=4$.

\medskip

\noindent{\bf Case 2.} Case 1 does not occur and there exists an $i$ such that $x_i \in N_P(x_1)$ and  $x_i\in N_P(x_m)$.

\medskip

Since $G$ is 2-connected, there exists a path $Q$ with $V(Q)\cap V(P)=\{x_u,x_v\}$ and $1\leq u<i<v\leq m$. Let
$$
p=\min\{w>u:x_w\in N_P(x_1)\} \mbox{ and } q=\max\{w<v:x_w\in N_P(x_{m})\}.
$$
Then $C=x_1Px_uQx_vPx_{m}x_qPx_px_1$ is a cycle containing $\{x_1,x_m\}\cup N_P(x_1)\cup N_P(x_{m})$. Hence
\begin{equation}\label{eq for case 2}
G \mbox{ contains a cycle of length at least }d_P(x_1)+d_P(x_m)+1.
\end{equation}
Since $c(G)\leq2k+1$, by (\ref{eq for case 2}), we have $d_P(x_1)=d_P(x_m)=k$, $x_u$  is adjacent to $x_v$ and
\begin{equation}\label{eq for T(k,r)}
V(x_1Px_u)\cup V(x_pPx_q)\cup V(x_vPx_m)=N_P[x_1]\cup N_P[x_m].
\end{equation}

\noindent{\bf Claim 3}. $p=q=i$.

\medskip

\begin{proof} 
Without loss of generality, suppose that  each vertex of $V(x_1Px_{i-1})$ is of degree at least $k$ in $G$. 
Consider the path $x_{\ell}Px_1x_{\ell+1}Px_m$.
We have $N_P[x_\ell]=N_P[x_1]$  for $2\leq \ell\leq u-1$. 
Otherwise, there is a path on at least $m+1$ vertices or a cycle of length at least $2k+2$, both give us a contradiction. 
Suppose that $p<i$. 
Then by (\ref{eq for T(k,r)}), we have $x_{i-1}\in N_P(x_1)$. 
Thus $P^\ast=x_uPx_{i-1}x_1Px_{u-1}x_iPx_m$ is a path on $m$ vertices such that $x_v\in N_{P^\ast}(x_u)$, $x_i\in N_{P^\ast}(x_m)$ and $x_i$ precedes $x_v$ in $P^\ast$, a contradiction to our assumption ($d_G(x_u)\geq k$, $d_G(x_m)\geq k$ and Case 1 does not occur). Thus, we have $p=i$.

If $d_G(x_v)\geq k$ and $d_G(x_{v+1})\geq k$, then, as the proof of $p=i$, we can similarly show that $q=i$. Thus we may suppose that either $d_G(x_v)\leq k-1$ or $d_G(x_{v+1})\leq k-1$.

Suppose for a contradiction that $q\geq i+1$. First we show that $q=v-1$. We consider the following two cases.

\medskip
(a) $d_G(x_v)\leq k-1$. Consider the path $x_{\ell}Px_mx_{\ell-1}Px_1$. We have $N_P[x_\ell]=N_P[x_m]$ for $v+1\leq \ell\leq m-1$.  Thus we have $q=v-1$, otherwise $x_1Px_ux_vPx_{i+1}x_{v+1}Px_mx_ix_1$ is a cycle on at least $2k+2$ vertices, a contradiction.

\medskip

(b) $d_G(x_{v+1})\leq k-1$. Consider the path $x_{\ell}Px_mx_{\ell-1}Px_1$.
We have $N_P[x_\ell]=N_P[x_m]$ for $v+2\leq \ell\leq m-1$.
If $q<v-2$, then $x_qPx_vx_uPx_1x_ix_{q-1}x_{v+2}Px_mx_q$
is a cycle on at least $2k+2$ vertices. If $q=v-2$, then
$x_{v-1}$ is nonadjacent to any vertex of $G-V(P)$, otherwise
there is a path on at least $m+1$ vertices, a contradiction.
Moreover, $x_{v-1}$ is only adjacent to $x_{v-2}$ and $x_{v}$
of $V(P)$, otherwise it is not hard to see that there is a
cycle on at least $2k+2$ vertices (note that $G[V(x_1Px_{u-1})\cup \{x_{i}\}]$
is a complete graph on $k$ vertices and $N_P[x_\ell]=N_P[x_m]$ for
$v+2\leq \ell \leq m-1$). Thus we have $d_G(x_{v-1})\leq k-1$,
which contradicts that there is at most one vertex of $G$
with degree less than $k$.

\medskip

Now we may suppose that $q=v-1$. Each vertex of $G-V(x_{i}Px_m)$
is nonadjacent to any of $V(x_{i+1}Px_m)$ except for the edge $uv$;
otherwise there is a path on $m+1$ vertices, a cycle on at least
$2k+2$ vertices, or Case 1 occurs, each gives a contradiction.
Hence $d_G(x_v)\geq k$, otherwise as $|V(x_{i+1}Px_m)|=k$, there are at least two vertices with degree less than $k-1$ in $V(x_{i+1}Px_m)$, a contradiction.
Thus Case 1 ($P^\ast=x_vPx_mx_{v-1}Px_1$) occurs, a contradiction.
So we have $q=i$. The proof is complete.\end{proof}

It follows from Claim 3 and the proof of Claim 3 that,
each pair in $V(x_1Px_{u})\cup \{x_i\}$ except for
$x_ux_i$ is an edge of $G$, and each pair of
$V(x_{v}Px_m)\cup \{x_i\}$ except for $x_vx_i$
is an edge of $G$. Hence the longest path starting
from $x_u$ (and $x_v$) and ending at $x_i$ is on
at most $k+2$ vertices. Thus, by the maximality of
$G$, $x_i$ is adjacent to both of $x_u$ and $x_v$.
Hence, there is no component of $G-\{x_u,x_v,x_i\}$
which is adjacent to both of $x_u$ and $x_v$, since otherwise
there is a cycle on at least $2k+2$ vertices. Let $A$
be the union of components of $G-\{x_u,x_v,x_i\}$ which
are adjacent to $x_v$ and $B$ be the union of components
of $G-\{x_u,x_v,x_i\}$ which are adjacent to $x_u$. Since
the longest path starting at $x_i$ through $A$ ending
at $x_v$ is on at most $k+1$ vertices (note that $p=i$) and
$G[V(A)\cup \{x_i,x_v\}]$ is 2-connected, by
Lemma~\ref{minimum degree without a-b path},
$A$ is a subgraph of $s K_{k-1}\cup K_1$
for some $s\geq 1$. Similarly,  $B$ is
a subgraph of $t  K_{k-1}\cup K_1$
for some $t\geq 1$. Since there is
at most one vertex of $G$ with degree less than
$k$, $G=F(s,t,k)$ for some $s\geq 1$ and $t\geq 1$.

\section{Concluding remarks}\label{Sec:Remark}

Our result has implications in extremal graph theory and spectral graph theory.
Moreover, our result can also imply a slight stronger form of a question posed by Fomin, Golovach, Lokshtanov, Panolan, Saurabh and Zebhavi (see  \cite{FGSS-Arxiv}).

\subsection{Some corollaries}
Ali and Staton \cite{AS96} characterized  all graphs on at least $2\delta+1$ vertices
with minimum degree $\delta$ and all longest paths of $2\delta+1$ vertices.

\begin{theorem}[Ali and Staton  \cite{AS96}]
Let $k>1$ and $n\geq 2k+1$. Let $G$ be a graph on $n$ vertices with $\delta(G)\geq k$.
If $G$ is connected then $P_{2k+2}\subset G$, unless $G\subset H(n,2k,k)$,
or $G=K_1+ tK_k$.
\end{theorem}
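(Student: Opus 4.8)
The plan is to deduce this from Theorem~\ref{Thm:stability-cycle} by the classical trick of adjoining a universal vertex. Given $G$ as in the statement, put $\widehat G:=G+K_1$, adding one new vertex $v$ adjacent to every vertex of $G$. Then $\widehat G$ is $2$-connected on $n+1$ vertices, every vertex of $\widehat G$ has degree at least $k+1$ (each old vertex gains $v$ as a neighbour, and $d_{\widehat G}(v)=n\ge k+1$), and $c(\widehat G)\le 2k+2$: a cycle of $\widehat G$ that misses $v$ is a cycle of $G$, hence (deleting one edge) a path of $G$, so it has at most $2k+1$ vertices; a cycle through $v$ becomes, after deleting $v$, a path of $G$, so it has at most $2k+2$ vertices.

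I would deal with the boundary value $n=2k+1$ separately, because there $\delta(\widehat G)\ge k+1=|V(\widehat G)|/2$, so $\widehat G$ is Hamiltonian by Dirac's theorem and Theorem~\ref{Thm:stability-cycle} does not apply. In this case one works directly with a longest path $P=x_1\cdots x_{2k+1}$ of $G$, which exists by connectivity and $\delta(G)\ge k$ via the Erd\H{o}s--Gallai path bound; one has $N(x_1),N(x_{2k+1})\subseteq V(P)$, and $x_1$ and $x_{2k+1}$ cannot be ``crossed'' on $P$ (otherwise $G$ would have $V(G)=V(P)$ and connectivity would produce a path longer than $P$), mirroring the opening of the proof of Theorem~\ref{Thm:stability-cycle}, from which the structure is extracted.

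For $n\ge 2k+2$ we have $n+1\ge 2k+3>c(\widehat G)$, so $\widehat G$ is non-Hamiltonian, and Theorem~\ref{Thm:stability-cycle} applies to $\widehat G$ with $k$ replaced by $k':=k+1\ge 3$ (note $c(\widehat G)\le 2k+2\le 2k'+1$): $\widehat G$ is a subgraph of one of the listed graphs $\Gamma$ built from $k'$, and the rest is a walk through that short list. Since $v$ is universal in $\widehat G$, every vertex of $\widehat G$ lies in the closed neighbourhood $N_\Gamma[v]$, so $\widehat G\subseteq\Gamma[N_\Gamma[v]]$; and since $\deg_\Gamma(v)\ge n$ exceeds every non-apex degree of $\Gamma$, $v$ is an apex vertex (inside the clique $A$ of an $H$-graph, or among the join-$K_2$/$K_3$ of the other graphs), so $\Gamma[N_\Gamma[v]]$ is again a graph from the list in which $v$ is genuinely universal, and $G=\widehat G-v$ embeds into the explicit graph $\Gamma[N_\Gamma[v]]-v$. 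Reinjecting $\delta(G)\ge k$ and ``$G$ has no $P_{2k+2}$'' then settles each case. For the $H$-graphs one gets $G\subseteq H(n,\ell,k)$ with $\ell\in\{2k+1,2k+2\}$; since $H(n,2k+1,k)=H(n,2k,k)$ (its $C$ is a single, edgeless vertex) and since, when $\ell=2k+2$, the lone edge inside the $2$-vertex set $C$ of $H(n,2k+2,k)$ would — once $\delta(G)\ge k$ has forced every $B$-vertex to be adjacent to all of $A$ — close an alternating $A$--$B$ path into a $P_{2k+2}$, we conclude $G\subseteq H(n,2k,k)$. For the $F$-type graphs, the pendant ``$K_1$'' components of $\Gamma[N_\Gamma[v]]-v$ have degree at most $1$ and so cannot belong to $V(G)$; the remaining structure forces each ``$K_{k-1}$'' block to close into a complete $K_k$ all of whose vertices are joined to one apex $w$, and threading two such blocks through $w$ would yield a $P_{2k+2}$, so $G=K_1+tK_k$ with $n=tk+1$. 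The bounded list of further families, which occur only for $k=2$ ($K_2+M_t$ and $K_2+(S_s\cup M_t)$) and $k=3$ ($K_3+M_t$), collapses in the same way: degree-$k$ vertices force almost all edges, the full graph always contains a $P_{2k+2}$, and what survives is a spanning subgraph of $H(n,2k,k)$ or again $K_1+tK_k$.

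The main obstacle is exactly this case-by-case bookkeeping — pushing each member of the list of Theorem~\ref{Thm:stability-cycle} back through the deletion of $v$ and confronting it with the conditions $\delta(G)\ge k$ and ``no $P_{2k+2}$'' — together with carrying out the boundary case $n=2k+1$ directly, where Theorem~\ref{Thm:stability-cycle} is not available.
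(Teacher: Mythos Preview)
Your approach is essentially the paper's own: the paper does not prove Ali--Staton separately but simply remarks that Theorem~\ref{Thm:stability-cycle} implies the path version (Theorem~\ref{Thm:stability-path}, of which Ali--Staton is a special case) via exactly the universal-vertex correspondence you describe --- phrased in the paper as ``for each $v$, $G-v$ is a connected graph such that every vertex except for at most one vertex is of degree at least $k-1$''. Neither the paper nor you spells out the case-by-case translation of the extremal families; you have sketched more of it than the paper does, and the sketch is sound (modulo the slip ``$K_{k-1}$'' for what, after the shift $k'=k+1$, are $K_k$ blocks).

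One warning on the boundary case you single out: for $n=2k+1$ the hypothesis ``$G$ contains no $P_{2k+2}$'' is vacuous (there are not enough vertices), so the theorem as stated would assert that \emph{every} connected $G$ on $2k+1$ vertices with $\delta(G)\ge k$ lies in $H(2k+1,2k,k)$ or equals $K_1+2K_k$. That is false --- take $G=C_5$ with $k=2$. This is almost certainly a typo in the paper's restatement of Ali--Staton (compare the Nikiforov--Yuan bound $n\ge 2k+2$ one line below); your proposed direct argument for $n=2k+1$ therefore cannot succeed, and you should simply restrict to $n\ge 2k+2$, where your reduction via Theorem~\ref{Thm:stability-cycle} goes through.
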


In the process of solving problems in spectral graph theory, Nikiforov and Yuan \cite{NY14}
obtained a much more complicated stability result.

\begin{theorem}[Nikiforov and Yuan \cite{NY14}]
Let $k\geq 2$ and $n\geq 2k+2$. Let $G$ be a graph on $n$ vertices with $\delta(G)\geq k$.
If $G$ is connected, then $P_{2k+3}\subset G$, unless one of the following holds:\\
(a) $G\subset H(n,2k,k) $;\\
(b) $G=K_1+ tK_k$;\\
(c) $G\subset K_1+ ((t-1)K_k\cup K_{k+1})$;\\
(d) $G$ is obtaining by joining the centers \footnote{The center of $K_1+s K_k$ is the vertex with degree $sk$.}
of $K_1+s K_k$ and $K_1+t K_k$.
\end{theorem}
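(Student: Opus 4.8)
The plan is to derive the statement from Theorem~\ref{Thm:stability-cycle} by a \emph{coning} trick that converts long paths in $G$ into long cycles in a slightly larger graph. Let $G$ be connected on $n$ vertices with $\delta(G)\ge k$ and containing no copy of $P_{2k+3}$, and put $G':=K_1+G$, where the added apex $v_0$ is joined to every vertex of $G$. Then $G'$ is $2$-connected (as $G$ is connected), has $n+1$ vertices, and every vertex of $G'$ has degree at least $k+1$: each vertex of $G$ gains the neighbour $v_0$, while $\deg_{G'}(v_0)=n\ge 2k+2$. The key observation is that $c(G')\le 2(k+1)+1=2k+3$: a cycle of length at least $2k+4$ in $G'$ either avoids $v_0$, and so lies in $G$ and already contains a $P_{2k+4}\supseteq P_{2k+3}$, or it runs through $v_0$, and then deleting $v_0$ from it leaves a path of $G$ on at least $2k+3$ vertices --- both contradicting the hypothesis on $G$. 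Furthermore, provided $n\ge 2k+3$ the graph $G'$ is non-Hamiltonian, since a Hamilton cycle of $G'$ would have length $n+1\ge 2k+4>c(G')$.

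Now apply Theorem~\ref{Thm:stability-cycle} to $G'$ with parameter $k+1$ (note $k+1\ge 3$): $G'$ must be a subgraph of $H(2k+3,2k+3,k+1)$ --- impossible here, since $|V(G')|=n+1>2k+3$ --- or of $H(n',2k+4,k+1)$, $F(s,t,k+1)$, $F_1(t,k+1)$, or, when $k\in\{2,3\}$, one of the sporadic graphs $K_2+M_t$, $K_2+(S_s\cup M_t)$ (for $k=2$) and $K_3+M_t$ (for $k=3$). In each host one locates the image of $v_0$: since $v_0$ is a \emph{universal} vertex of $G'$ of degree $n\ge 2k+3$, while every other vertex of $G'$ has degree at least $k+1$, its image is a host vertex of degree at least $2k+3$; this excludes the low-degree ``outer'' blocks and forces the image of $v_0$ into the part $A$ of $H(n',2k+4,k+1)$, into the triangle $\{x,y,z\}$ of $F(s,t,k+1)$, into the $K_2$ of $F_1(t,k+1)$, or into the $K_2$ or $K_3$ of the sporadic graphs. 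Deleting $v_0$ from $G'$ and from the host, and then simplifying the remaining graph --- absorbing the stray $K_1$ appearing in $F(s,t,k+1)$ and $F_1(t,k+1)$ and the matching $M_t$ into a $K_{k+1}$-block --- exhibits $G=G'-v_0$ as a subgraph of one of the families (a)--(d), and in cases (b)--(d) pins the order down to $n=tk+1$, $tk+2$ or $(s+t)k+2$.

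It remains to dispose of the boundary value $n=2k+2$, where $K_1+G$ may be Hamiltonian and Theorem~\ref{Thm:stability-cycle} does not apply directly; here ``$G$ has no $P_{2k+3}$'' is automatic, and one checks by a short direct argument (analysing a longest path of $G$, or combining the coning above with Ore's description of $2$-connected graphs whose circumference equals the Dirac bound) that every connected graph on $2k+2$ vertices with minimum degree at least $k$ already appears in (a)--(d). The step I expect to be the real obstacle is the bookkeeping in the second paragraph: accurately matching the five families delivered by Theorem~\ref{Thm:stability-cycle} at parameter $k+1$ against the four families (a)--(d), verifying in every subcase that the image of $v_0$ indeed lies in the claimed block, and checking that all the low-degree leftovers (the $K_1$ summands and the matchings) can be absorbed; the rest is routine.
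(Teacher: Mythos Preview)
Your coning trick $G\mapsto K_1+G$ is precisely the paper's route: the remark preceding Theorem~\ref{Thm:stability-path} observes that deleting a vertex from a $2$-connected graph lowers the degree parameter by one, which is the inverse of your construction, and the paper supplies no more detail than you do before declaring that Theorem~\ref{Thm:stability-cycle} implies the path statement. Your caution about the bookkeeping is apt --- carrying it through shows that item (a) ought to read $H(n,2k+2,k)$ rather than $H(n,2k,k)$, which appears to be a slip in the paper's restatement of Nikiforov--Yuan (repeated in Theorem~\ref{Thm:stability-path}(i)), not a defect in your argument.
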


Let $G$ be a connected graph such that every
vertex except for at most one vertex is of degree at least $k-1$.
Denoted by $G^\ast$ the graph obtained from $G$ by adding a new vertex and joining it to each vertex of $G$.
Note that $G^\ast$ is a 2-connected graph such that every vertex except for at most one vertex is of degree at least $k$. 
With this fact in mind, one can see
Theorem~\ref{Thm:stability-cycle} implies the following theorem, which is a common generalization (we do not need the condition on the number of vertices) of results
of Ali and Staton \cite[Theorem~2]{AS96} and Nikiforov and Yuan \cite{NY14} .

\begin{theorem}\label{Thm:stability-path}
Let $k\geq 1$. Let $G$ be a connected $n$-vertex graph which does not contain a path on $\min\{n,2k+3\}$ vertices.
If every vertex except for at most one vertex is of degree at least $k$, then $G$ is a subgraph of one of the following graphs.
\begin{itemize}
\item  $H(2k+2,2k+1,k)$   and   $H(n,2k+2,k)$ with $n\geq 2k+3$;
\item $K_1+(tK_{k}\cup K_{k+1}\cup K_1)$ with $t\geq 1$;
\item  the graph obtained by joining the centers
$K_1+s K_k$ and $K_1+(t K_k\cup K_1)$, where $s\geq 1$ and $t\geq 1$; \footnote{The center of $K_1+(t K_k\cup K_1)$ is the vertex with degree $tk+1$.}
\item $K_1+ M_{t}$ with $t\geq 6$, $K_1+(S_s\cup M_{t})$  with $ s+t\geq 6$ when $k=2$, and $K_2+ M_{t}$ with $t\geq 7$  when $k=3$.
\end{itemize}
\end{theorem}
\begin{proof}
Let $G^\ast$ be obtained from $G$ by adding a new vertex $v$ and joining it to all vertices of $G$.
Then  every vertex except for at most one vertex of $G^\ast$ is of degree at least $k+1$ and $c(G^\ast)\leq 2k+3$.
Apply Theorem~\ref{Thm:stability-cycle} to $G^\ast$ and remove $v$, the result follows.
\end{proof}

An odd wheel $W_{2k+3}$ is a graph formed by connecting a single vertex to all vertices of a cycle of length $2k+2$.
In \cite{Y22}, the second author proved that  the extremal graphs for $W_{2k+3}$ are obtained by taking an extremal graph for $\{S_{k+2},P_{2k+1}\}$ with order $n/2+o(n)$, say $G_1$, a graph with a unique edge and order $n/2+o(n)$, say $G_2$, and joining each vertex of $G_1$ to each vertex of $G_2$.

Let $\mathcal{H}$ be a given family of graphs. We say a graph is $\mathcal{H}$-free if it does not contain any $H\in \mathcal{H}$ as a subgraph.
The {\it Tur\'{a}n number} of $\mathcal{H}$, denoted by ex$(n,\mathcal{H})$, is the maximum number of edges in an  $n$-vertex $\mathcal{H}$-free graph.
We call an $\mathcal{H}$-free graph with ex$(n,\mathcal{H})$ edges an extremal graph for $\mathcal{H}$.

\begin{proposition}[Yuan \cite{Y22}]\label{k-1 regular no C2k}
Let $n\geq 2k+2$. Then
$$\emph{ex}(n,\{S_{k+2},P_{2k+1}\})= \lfloor kn/2 \rfloor.$$
\end{proposition}

A corollary of Theorem~\ref{Thm:stability-path} is the following which helps us to characterize all extremal graphs for $W_{2k+3}$.

\begin{theorem}
Let $n\geq 2k+2$ and $k\geq 1$. Then the extremal graphs for $\{S_{k+2},P_{2k+1}\}$ consist of components with order at most $2k$.
\end{theorem}
\begin{proof}
Let $G$ be an extremal graph for $\{S_{k+2},P_{2k+1}\}$ on $n$ vertices.
By Proposition~\ref{k-1 regular no C2k}, we have $e(G)=\left\lfloor kn/2 \right\rfloor$, whence there is at most one vertex with degree $k-1$ and all other vertices have degree $k$. 
Let $X$ be a component of $G$ with order at least $2k+1$. 
Clearly, $X$ does not contain a Hamilton path.
Applying Theorem~\ref{Thm:stability-path}, as $G$ is $S_{k+2}$-free, it is easy to check that $X$ cannot be a subgraph of graphs in Theorem~\ref{Thm:stability-path}, a contradiction.
The proof is complete.
\end{proof}

Our result can also imply the stability results for linear forest in \cite{CZ19}.
Since results in \cite{CZ19} are very complicated, we skip the details.
We conclude our paper with the following algorithmic discussions on our result.

\subsection{Algorithmic discussions}
Note that, for large $n$, $H(n,2k+2,k)$, $F_1(t,k)$ and  $F(s,t,k)$ have $k,2$ and $3$ vertices with degree at least $k+2$ respectively, and each of them have at most $k+2$ neighbours with degree more than $k$.
We propose the following algorithm by finding vertices with degree more than $k$ first.

\medskip

\noindent
{\bf Algorithm:} Let $G$ be a 2-connected non-Hamiltonian $n$-vertex graph satisfying that  every vertex except for at most
one vertex is of degree at least $k\geq 5$. Determine whether $c(G)\geq 2k+2$.

\medskip

\noindent
{\bf Input:} Given a $2$-connected graph $G$ on $n$ vertices such that all but at most one vertex of it have degree at least $k$ and $e(G)=\Theta( kn)$\footnote{If $e(G)\geq 2kn$, then by Theorem~\ref{Thm:stability-cycle}, we can easily get $c(G)\geq 2k+2$}.

\medskip

\noindent
{\bf Output:} $c(G)\geq 2k+2$ or $c(G)\in\{2k+1, 2k+2\}$ (when $c(G)\geq 2k+2$ we set $T=1$ and $c(G)\in\{2k+1, 2k+2\}$ we set $T=0$).

\medskip

\noindent Take any $k+3$ vertices of $G$, if none of those vertices
has degree $k$, then set $T=1$; else take a vertex, say $x$, with degree $k$:
  \begin{itemize}
          \item if each vertex of $N(x)$ has degree at least $k+2$,\footnote{$G$ is a subgraph of $H(n,2k+2,k)$.} while there is at most one edge in $E(G-N(x))$, set $T=0$; else set $T=1$;

          \item if there are exactly two vertices, say $u, v$ with $d(u)\geq d(v)$,  of $N(x)$ have degree at least $k+2$,\footnote{$G$ is a subgraph of  $F(s,t,k)$ or $F_1(t,k)$.}
           while  \begin{itemize}
           \item  $G$ is a subgraph of  $F_1(s,k)$ or 
           \item  $G$ is a subgraph of  $F(t_1,t_2,k)$, 
             \end{itemize}
             set $T=0$; else set $T=1$;
          \item else, we set $T=1$.
        \end{itemize}

%

Conclusion: For any $k+3$ vertices, if there is no
vertex with degree $k$, then stop our algorithm within $O(kn)$ unit operations.
Let $x$ be a vertex with degree $k$ in $G$.
Note that there are $\Theta(kn)$ edges in $G$.
If each vertex of $N(x)$ has degree at least $k+2$, then
we will stop our algorithm within $O(kn)$ unit operations;
if there are exactly two vertices of $N(x)$ have degree at least
$k+2$, then we will stop within $O(kn)$  unit operations;
else we will stop our algorithm within $O(kn)$ unit operations.
Thus, the above algorithm has worst case complexity
$O(kn)$.


\begin{thebibliography}{1}

\bibitem{AS96}
A.~Ali and W.~Staton, On extremal graphs with no long paths,
{\em Electron. J. Combin.} {\bf 3} (1996), no. 1, Paper 20, 4 pp.

\bibitem{B72}
J.~A.~Bondy, Large cycles in graphs,
{\em Discrete Math.} {\bf 1} 1971/1972, no. 2, 121--132.

\bibitem{B03}
J.~A.~Bondy, Basic graph theory: Paths and Circuits, Unversity of
Waterloo, December 2003.

\bibitem{BJ85}
J.~A.~Bondy, B. Jackson, Long paths between specified vertices of a block.
Cycles in graphs (Burnaby, B.C., 1982), 195--200, North-Holland Math. Stud.,
115, Ann. Discrete Math., 27, North-Holland, Amsterdam, 1985.

\bibitem{BM08}
J.~A.~Bondy, U.S.R. Murty, Graph theory. Graduate Texts in Mathematics,
244. Springer, New York, 2008. xii+651 pp. ISBN: 978-1-84628-969-9.

\bibitem{CZ19}
M.-Z. Chen, X.-D. Zhang, Erd\H{o}s-Gallai stability theorem for linear forests,
\emph{Discrete Math.} {\bf 342} (2019), no. 3, 904--916.

\bibitem{D52}
G.~A.~Dirac, Some theorems on abstract graphs,
{\em Proc. Lond. Mac. Soc.} {\bf (3)2} (1952), 69--81.

\bibitem{E62}
P. Erd\H{o}s, Remarks on a paper of P\'{o}sa,
{\em Magyar Tud. Akad. Mat. Kut. Int. K\"{o}zl} {\bf 7} (1962),
227--229.


\bibitem{EG59}
P. Erd\H{o}s and T. Gallai, On maximal paths and circuits of graphs,
{\em Acta Mathematica Hungarica}  {\bf 10(3)} (1959), 337--356.


\bibitem{FGSS-Arxiv}
F.V. Fomin, P.A. Golovach, D. Sagunov and K. Simonov,
Algorithmic Extensions of Dirac's Theorem,
Proceedings of the 2022 Annual ACM-SIAM Symposium on Discrete Algorithms (SODA), 406-416, [Society for Industrial and Applied Mathematics (SIAM)], Philadelphia, PA, 2022.

\bibitem{FKV16}
Z.~F\"{u}redi, A.~Kostochka and J.~Verstra\"{e}te,
Stability in the Erd\H{o}s-Gallai Theorem on cycles and paths,
{\em J. Combin. Theory Ser. B} {\bf 121} (2016), 197--228.


\bibitem{FKL17}
Z.~F\"{u}redi, A.~Kostochka and R.~Luo,
A stability version for a theorem of Erd\H{o}s on nonhamiltonian graphs,
{\em Discrete Math.} {\bf 340} (2017), 2688--2690.


\bibitem{FKLV18}
Z.~F\"{u}redi, A.~Kostochka, R.~Luo and J.~Verstra\"{e}te,
Stability in the Erd\H{o}s-Gallai Theorem on cycles and paths, II,
{\em Discrete Math.} {\bf 341} (2018), 1253--1263.

\bibitem{GHS02}
R.J. Gould, P.E. Haxell, and A.D. Scott,
A note on cycle lengths in graphs,
{\it Graphs Combin.}, {18} (2002) 491--498.

\bibitem{HS15}
Z.-Q. Hu, J. Sun, Weakly bipancyclic bipartite graphs,
\emph{Discrete Appl. Math.} {\bf 194} (2015), 102--120.

\bibitem{KS20}
P. Keevash, B. Sudakov, Pancyclicity of Hamiltonian and highly connected graphs,
\emph{J. Combin. Theory Ser. B}  {\bf 100} (2010), no. 5, 456--467.


\bibitem{K77}
G.~N.~Kopylov,
On maximal paths and cycles in a graph,
{\em Soviet Math. Dokl.} {\bf 18} (1977), 593--596.

\bibitem{L13}
H. Li, Generalizations of Dirac's theorem in Hamiltonian graph theory-a survey,
{\em Discrete Math.} {\bf 313} (2013), no. 19, 2034--2053.

\bibitem{LN16}
B. Li and B. Ning,
Spectral analogues of Erd\H{o}s' and Moon-Moser's theorems on Hamilton cycles,
{\em  Linear Multilinear Algebra} {\bf 64} (2016), 2252--2269.


\bibitem{MN20}
J.~Ma and B.~Ning,
Stability results on the circumference of a graph,
{\em Combinatorica} {\bf 40} (2020), 105-147.


\bibitem{MY20+}
J.~Ma and L.-T.~Yuan,
A clique version of the Erd\H{o}s-Gallai stability theorems, arxiv:2010.13667v1.


\bibitem{NY14}
V.~Nikiforov and X.~Yuan,
Maxima of the Q-index: graphs without long paths,
{\em Electron. J. Linear Algebra} {\bf 27} (2014), 504--514.

\bibitem{O67}
O.~Ore,
On a graph theorem by Dirac,
{\em J. Combinatorial Theory} {\bf 2} (1967), 383--392.

\bibitem{S74} M.~Simonovits, Extremal graph problems with symmetrical extremal graphs, additionnal chromatic conditions, \emph{Discrete Math.} {\bf 7} (1974), 349--376.

\bibitem{TZ89}
F. Tian, W.A. Zang, Bipancyclism in Hamiltonian bipartite graphs,
\emph{Systems Sci. Math. Sci.} {\bf 2} (1989), no. 1, 22--31.

\bibitem{V91} H.-J. Voss, Cycles and Bridges in Graphs, Kluwer Academic Publishers, Dordrecht, VEB
Deutscher Verlag der Wissenschaften, Berlin, 1991.

\bibitem{Y22} L.-T.~Yuan, Extremal graphs for odd wheels. {\em J. Graph Theory} {\bf 98} (2021), no. 4, 691-707.

\bibitem{Z22}  X. Zhu, E. Gy\H{o}ri, Z. He, Z. Lv, N. Salia, C. Xiao, Stability version of Dirac's theorem and its applications for generalized Tur\'{a}n problems, {\em Bull. Lond. Math. Soc.} {\bf 55} (2023), no. 4, 1857-1873.
\end{thebibliography}
\end{document}